\newtheorem{theorem}{Theorem}
\newtheorem*{corollary}{Corollary}
\newtheorem{lemma}{Lemma}
\newtheorem*{claim*}{Claim}
\newtheorem*{prob*}{Problem}
\newtheorem{case'}{Case}
\newtheorem{case''}{Case}
\newtheorem*{conjecture*}{Conjecture}
\newtheorem*{note}{Note}
\newcommand{\F}{\mathbb{F}}
\newenvironment{theorem*}[2][Theorem]{\begin{trivlist}
\item[\hskip \labelsep {\bfseries #1}\hskip \labelsep {\bfseries #2}]}{\end{trivlist}}
\newenvironment{lemma*}[2][Lemma]{\begin{trivlist}
\item[\hskip \labelsep {\bfseries #1}\hskip \labelsep {\bfseries #2}]}{\end{trivlist}}
\newenvironment{corollary*}[2][Corollary]{\begin{trivlist}
\item[\hskip \labelsep {\bfseries #1}\hskip \labelsep {\bfseries #2}]}{\end{trivlist}}
\begin{document}
\title{English Translation of Tschebotar{\"o}w's ``The Problem of Resolvents and Critical Manifolds''}
\author{Hannah Knight\\
This translation was supported in part by NSF Grant DMS-1811846.} 
\date{}
\maketitle
\begin{center}  \end{center}

\begin{center}\section*{Bibliographic Information}\end{center}

\begin{itemize}

\item Author: N. G. Tschebotar{\"o}w
\item Title: The problem of resolvents and critical manifolds
\item Newspaper: Izvestia USSR Academy of Sciences
\item Series: Mathematical Series
\item Date: 1943
\item Volume: 7
\item Issue: 3
\item Pages: 123-146

\end{itemize}

\newpage

\begin{center} \section*{The Problem of Resolvent and Critical Manifolds}\end{center}

\begin{abstract} This paper is devoted to the study of the problem of resolvents, i.e., the problem of finding the resolvent by a given equation, whose coefficients depend on several independent parameters, the number of parameters in the coefficients being as small as possible.  The author connects the problem to the study of critical manifolds in the space of equation parameters. \end{abstract}

This article is devoted to the problem of resolvents posed by F. Klein  \cite{Kl3} and significantly advanced by D. Hilbert \cite{Hi1} and \cite{Hi2}. It consists in finding a rational transformation of an algebraic equation containing variable parameters such that the the transformed equation contains as few independent parameters as possible. While Klein required that the conversion coefficients be either rational or containing predetermined irrationalities, Hilbert assumed that they depend on the roots of the auxiliary equations, which in turn admit resolvents with a small number of parameters.

The methods by which attempts were made to solve the problems of Klein and Hilbert are also significantly different. Klein's problem was reduced to the problem giving the Galois group of a given equation a Lie group structure\footnote{The literal translation was ``dress the Galois group of a given equation by a Lie group''}, represented in a space of the smallest possible dimension. I \cite{Ts5} showed that an equation of degree $n$ with unboundedly variable coefficients and an alternating Galois group cannot be transformed into a resolvent depending on less than $n - 3$ parameters. Thus it was found that the solution Klein's problem is significantly different from the solution to Hilbert's problem.

For Hilbert's Problem, Hilbert himself proposed a special trick, giving for $n = 5, 6, 7, 8, 9$ the values of the number of parameters in resolvents given in the following table:

\begin{center}

\begin{tabular}{c | c c c c c}
n & 5 & 6 & 7 & 8 & 9\\
\hline
s & 1 & 2 & 3 & 4 & 4
\end{tabular}
.
\end{center}

Wiman \cite{Wi7} showed that for $n \geq 9$ there exist resolvents with $s \leq n - 5$ parameters. Moreover, it remains unclear whether these values of s are the smallest possible.

\marginpar{Page 124}

In this article, a new method for studying resolvents is introduced. This method is based on the construction of higher critical manifolds in the spaces whose coordinates are the parameters on which the coefficients of a given equation depend. A higher critical manifold, I define to be a set of points in the space of parameters at which several roots of the equation coincide. For each equation, all types of higher critical manifolds can be defined. If for an equation, we can find a chain of critical manifolds of length $s$ such that each critical manifold is contained in the previous one, then during a birational transformation of the equation, this chain is mapped into the same chain with each manifold having a smaller dimension than the previous one. This shows that the equation cannot have a resolvent with less than $s$ parameters.  This gives a lower bound for the value of $s$ (while Hilbert and Wiman give an upper bound) if we restrict ourselves to rational resolvents.\footnote{This restriction is essential dimension vs. resolvent degree.} The study of irrational resolvents, which requires a detailed study of critical manifolds for relative fields, I postpone until the next article. \footnote{I don't think he ever wrote this later article.}

Applying this method to equations with unboundedly variable coefficients and an alternating Galois group gives a lower bound of $$s = \left [ \frac{n-1}{2} \right ],$$ which is very close to the values found by Hilbert

\begin{center}

\begin{tabular}{c | c c c c c}
n & 5 & 6 & 7 & 8 & 9\\
\hline
s & 2 & 2 & 3 & 3 & 4
\end{tabular}
.
\end{center}

However, a comparison of both $s$ for $n = 5$ shows that the use of irrational resolvents is significant.

The question of whether the lower bound found for $s$ will also be the upper bound is connected to the question of the actual construction of the resolution with the help of critical manifolds. Currently, I have some considerations that make it very likely a positive answer to this question. However, they are not enough to make any definition conclusion. \footnote{I don't think he ever wrote anything more on these ``considerations.''}

\begin{center}\section{Higher critical manifolds}\end{center}
Given an equation
\begin{equation}\label{eq1} f(x) = x^n + a_1x^{n-1} + \cdots + a_n = 0 \end{equation}
whose coefficients are polynomials in a number of independent variables with coefficients in the field of complex numbers. These variables themselves will be represented as complex coordinates of points \marginpar{Page 125} spaces $U$, that we assume are $m$-dimensional in complex sense, or $2m$-dimensional in the real sense. If for any point $P$ of the space $U$, the discriminant $D$ of equation (\ref{eq1}) does not vanish, then the roots
$$x_1, x_2, \cdots, x_n$$
 of equation (\ref{eq1}) are defined in a neighborhood of the point $P$ as analytic functions of $m$ variables $u_i$, which can be extended to the whole space $U$. However, such an extension is not always well-defined. Furthermore, if the equation (\ref{eq1}) is irreducible in the field of rational functions of $u_1,u_2,\cdots, u_m$, then in $U$ one can find a closed path such that if any of the roots extends along it $x_1, x_2, \cdots, x_n$ are permuted. In fact, if in the continuation of all closed paths a root $x_i$ is only sent to 
$$x_1, x_2, \cdots, x_k (k<n),$$
then all elementary symmetric functions of these roots will be well defined in the whole space $U$. Hence it would follow that they are rational with respect to each of the variables $u_1, u_2, \cdots, u_m$ individually and therefore are rational functions in terms of the $u_i$. Thus, equation (\ref{eq1}) would have a factor of degree $k < n$, with coefficients rationally depending on $u_1, u_2 \cdots, u_m$.

The set of permutations obtained as a result of extending the roots along various closed paths of the space $U$ is called the group $G$ of equation (\ref{eq1}). It is easy to make sure that it coincides with the Galois group of the equation, if we take the domain of rationality to be the set of rational function of $u_1, u_2, \cdots u_m$ with complex coefficients. 

Any closed path in the space $U$ can be contracted to a point. It follows that in $U$ there exist infinitesimal closed paths, through which the roots undergo non-trivial permutations, and the union of the latter group is the whole group $G$.\footnote{The literal translation was ``The totality of the latter has composite the whole group $G$.''} It is clear that the roots moving on the infinitesimal closed path must be infinitely close, so that such paths lie in the a neighborhood of the manifold points 
\begin{equation}\label{eq2} D(u_1,u_2,\cdots,u_m) = 0,\end{equation}
where $D$ is the discriminant of equation (\ref{eq1}).

To determine the manifolds of points in the neighborhood of which there are closed paths along which the roots undergo permutations of a give cycle type (more precisely, a given conjugacy class of elements of the group $G$), consider the product
\begin{equation}\label{eq3} \prod_{S \in G} (t_1x_{a_1} + t_2x_{a_2} + \cdots + t_nx_{\alpha_n}) = \Phi(t_1,t_2,\cdots,t_n),\end{equation}
\marginpar{Page 126}
where $t_1,t_2,\cdots,t_n$ are independent variables and 
$$S = \begin{pmatrix} 1 & 2 & \cdots & n\\
\alpha_1 & \alpha_2 & \cdots & \alpha_n\end{pmatrix},$$
and the product is over all permutations $S$ of the group $G$. The coefficients in (\ref{eq3}) obviously are rational functions in the variables $u_1, u_2, \cdots, u_m$.

We write the permutation $S$ in cycle notation
$$S = (1,2,\cdots, \mu_1)(\mu_1+1, \cdots, \mu_2) \cdots (\mu_{k-1} + 1, \cdots, n).$$
We say that a point $P$ of the space $U$ lies in the manifold corresponding to the permutation $S$ if the coordinates coincide in $P$:
\begin{equation}\label{eq4} \begin{rcases} x_1 = x_2 = \cdots = x_{\mu_1}\\
x_{\mu_1+1} = \cdots = x_{\mu_2}\\
\cdots \cdots \cdots \cdots \cdots \cdots\\
x_{\mu_{k-1}+1} = \cdots = x_n.\end{rcases}\end{equation}

If the manifold corresponding to permutation $T$ (we will denote it by $U_T$), is contained in $U_S$
$$U_T \subset U_S,$$ then we say the permutation $T$ is higher than the permutation $S$:
\begin{equation}\label{eq5} T > S \end{equation}
This will occur if and only if each set of roots in the cycles for the permutation $S$ are contained in the cycles for $T$.

Furthermore, by $U_{(S)}$ we denote the manifold of points of $U$ corresponding to one class of permutations containing $S$. We say that a class $(T)$ is above $(S)$ if $(T)$ contains a permutation which is higher than any permutation from $(S)$.

We will use $\Phi_S$ to denote the element in the product in (\ref{eq3}) corresponding to $S$, in which we subject the variable $t_1, t_2, \cdots t_n$ to the following conditions:
\begin{equation}\label{eq6} \begin{rcases} t_1 + t_2 + \cdots + t_{\mu_1} &=0,\\
t_{\mu_1+1} + \cdots + t_{\mu_2} &= 0,\\
\cdots \cdots \cdots \cdots \cdots \cdots\\
t_{\mu_{k-1}+1} + \cdots + t_n &= 0.\end{rcases} \end{equation}

Then the following holds:

\begin{theorem} In order for a point point $P$ to lie in $U_{(S)}$, it is necessary and sufficient that for this point all the coefficients of $\Phi_{S}$ are zero.\end{theorem}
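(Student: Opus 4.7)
The plan is to reduce the theorem to a concrete linear-algebra computation on the restricted form $\Phi_S$, and then handle the local-labeling ambiguity at the end. I fix a local analytic labeling $x_1,\ldots,x_n$ of the roots near $P$ and set $\alpha_i = S(i)$, so that $\Phi_S = t_1 x_{\alpha_1}+\cdots+t_n x_{\alpha_n}$ subject to the cycle-sum constraints (\ref{eq6}).

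First I will parametrize the subspace cut out by (\ref{eq6}): within each cycle $\{\mu_{j-1}+1,\ldots,\mu_j\}$ of $S$, solve for one variable in terms of the others, for instance $t_{\mu_j} = -(t_{\mu_{j-1}+1}+\cdots+t_{\mu_j-1})$, and substitute into $\Phi_S$. A direct expansion shows that the resulting linear form in the $n-k$ remaining variables $t_i$ (with $i\notin\{\mu_1,\ldots,\mu_k\}$) has coefficient $x_{\alpha_i}-x_{\alpha_{\mu_j}}$ on $t_i$, where $j$ denotes the cycle containing $i$.

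Next I will argue that these coefficients all vanish at $P$ if and only if, within every cycle of $S$, the roots $x_{\alpha_i}$ are mutually equal. Because $S$ permutes each cycle internally, $\{\alpha_i : i\text{ in cycle }j\}$ is just a rearrangement of $\{\mu_{j-1}+1,\ldots,\mu_j\}$, so this condition is the same as the chain of equalities $x_{\mu_{j-1}+1}=\cdots=x_{\mu_j}$ for every $j$, which is precisely the system (\ref{eq4}). Thus "all coefficients of $\Phi_S$ vanish at $P$" is equivalent to "$P\in U_S$".

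Finally, to upgrade the conclusion from $U_S$ to the class-indexed $U_{(S)}$, I will use that the local labeling of roots is canonical only up to the monodromy action of $G$: continuing the labeling along a loop represented by $\sigma\in G$ converts the manifold $U_S$ into $U_{\sigma S\sigma^{-1}}$ and correspondingly transforms $\Phi_S$ into $\Phi_{\sigma S\sigma^{-1}}$. Consequently the vanishing condition on $\Phi_S$ is a labeling-free statement that depends only on the conjugacy class $(S)$, and thereby cuts out exactly $U_{(S)}$. The main obstacle is not the substitution in step one (mechanical) nor the cycle-invariance observation in step two (immediate), but rather this last piece of bookkeeping, where the ambiguity in choosing a local labeling of roots must be shown to match up precisely with the passage from the labeling-dependent manifold $U_S$ to the conjugacy-invariant $U_{(S)}$.
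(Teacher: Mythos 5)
Your core computation (parametrize the $t$'s by solving the cycle-sum constraints and observe that the resulting linear form vanishes identically if and only if the roots coincide according to the table~(\ref{eq4})) is correct and is essentially the mechanism behind equation~(\ref{eq7}) in the paper. But you have misread the object $\Phi_S$: it is not the single factor of the product~(\ref{eq3}) corresponding to the permutation $S$; it is the \emph{entire product} over all permutations of $G$, restricted to the linear subspace~(\ref{eq6}) determined by the cycle pattern of $S$. This is what makes the theorem well-posed: the product is symmetric under monodromy, so its coefficients (as a polynomial in the surviving $t$'s) are genuine rational functions of $u_1,\dots,u_m$ and can be evaluated at $P$ without any choice of local labeling. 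The paper's sufficiency argument (``at least one of the factors of the expression in~(\ref{eq3}) is zero'') only makes sense under this reading, and so does the necessity argument, where it is the \emph{identity} factor $t_1x_1+\cdots+t_nx_n$, not the $S$-factor, that is shown to vanish when~(\ref{eq4}) holds, after which ``some other factor'' handles the general point of $U_{(S)}$.

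Under your single-factor reading, the linear form's ``coefficients'' are differences of individual roots, which are not functions of the $u_i$ at all, and the theorem as stated would be false: if $P$ lies in $U_{T^{-1}ST}$ but not $U_S$ for a fixed local labeling, your $\Phi_S$ does not vanish at $P$, yet $P\in U_{(S)}$. Your attempted repair in the last step, appealing to the ambiguity of local labeling, quietly changes the statement from ``the coefficients of $\Phi_S$ vanish at $P$'' to ``there exists a labeling for which the $S$-factor vanishes.'' That existential quantifier is exactly what the product over all $T\in G$ builds in automatically: its vanishing forces (by integrality of the polynomial ring) that \emph{some} factor $t_1x_{\alpha_1}+\cdots+t_nx_{\alpha_n}$ with $T=(\alpha_i)\in G$ vanishes, which by your step-one computation gives the coincidence pattern for $T^{-1}ST$, hence membership in $U_{(S)}$. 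So the repair is on the right track, but it should be absorbed into the definition of $\Phi_S$ as the full product rather than left as an informal relabeling argument.
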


\begin{proof}[Proof 1] The condition is necessary. In fact, if there are the coinciding points in (\ref{eq4}), the linear form $$t_1x_1 + t_2x_2 + \cdots + t_nx_n$$
\marginpar{Page 127} in the conditions from (\ref{eq6}) vanishes since

\begin{equation}\label{eq7} \begin{rcases}
&t_1x_1 + t_2x_2 + \cdots + t_nx_n =\\
&=t_1x_1 + \cdots + t_{\mu_1-1}x_{\mu_1-1} - (t_1 + \cdots + t_{\mu_1-1}x_{\mu_1} +\\
&+ t_{\mu_1+1}x_{\mu_1+1} + \cdots + t_{\mu_2-1}x_{\mu_2-1} - (t_{\mu_1+1} + \cdots + t_{\mu_2-1})x_{\mu_2}+\\
&\cdots \cdots \cdots \cdots \cdots \cdots \cdots \cdots \cdots \cdots \cdots \cdots \cdots \cdots \cdots \cdots \cdots \cdots \cdots \cdot \\
&t_{\mu_{n-1}+1}x_{\mu_{n-1}+1} + \cdots + t_{n-1}x_{n-1} - (t_{\mu_{n-1}+1} + \cdots + t_{n-1})x_n = \\
&= t_1(x_1 - x_{\mu_1}) + \cdots + t_{\mu_1-1}(x_{\mu_1-1} - x_{\mu_1}) + \\
&+ t_{\mu_1+1}(x_{\mu_1+1} - x_{\mu_2} + \cdots + t_{\mu_2-1}(x_{\mu_2-1} - x_{\mu_2}) +\\
&\cdots \cdots \cdots \cdots \cdots \cdots \cdots \cdots \cdots \cdots \cdots \cdots \cdots \cdots \cdots \cdots \cdots \cdots \cdots \cdot \\
&+ t_{\mu_{n-1}+1}(x_{\mu_{n-1}+1} - x_n) + \cdots + t_{n-1}(x_{n-1} - x_n). \end{rcases} \end{equation}

 If a point $P$ has coinciding coordinates corresponding to some other permutation in the same class\footnote{according to table (\ref{eq4}}), then some other factor of the expression will vanish in (\ref{eq3}). Thus,
$$\Phi_S = 0.$$
 A 2nd condition is sufficient. In fact, if the point $P$ satisfies 
$$\Phi_S = 0,$$
then assuming the conditions in (\ref{eq6}), at least one of the factors of the expression in (\ref{eq3}) is zero. Presenting it in a form similar to (\ref{eq7}), we will ensure that there are coinciding coordinates similar to (\ref{eq4}).  Namely, if the following factor vanishes

$$t_1s_{\alpha_1} + t_2x_{\alpha_2} + \cdots + t_nx_{\alpha_n} = 0,$$

where

$$T = \begin{pmatrix} 1 & 2 & \cdots & n\\
\alpha_1 & \alpha_2 & \cdots & \alpha_n \end{pmatrix} \in G,$$

then the table of equalities will become

\begin{center}\begin{align*}
&x_{\alpha_1} = x_{\alpha_2} = \cdots = x_{\alpha_{\mu_1}}\\
&x_{\alpha_{\mu_1+1}} = \cdots = x_{\alpha_{\mu_2}}\\
&\cdots \cdots \cdots \cdots \cdots \cdots\\
&x_{\alpha_{\mu_{k-1}+1}} = \cdots = x_{\alpha_n}.
\end{align*}\end{center}

That is, it will correspond to the permutation

\begin{center}\begin{align*}
&\begin{pmatrix} \alpha_1 & \alpha_2 & \cdots & \alpha_n\\
1 & 2 & \cdots & n \end{pmatrix} (1, 2, \cdots, \mu_1)(\mu_1+1, \cdots, \mu_2) \cdots\\
&\text{ } \cdots (\mu_{k-1} + 1, \cdots, n) \begin{pmatrix} 1 & 2 & \cdots & n\\
\alpha_1 & \alpha_2 & \cdots & \alpha_n\end{pmatrix} = T^{-1}ST,\end{align*}\end{center}
from which it follows that the point $P$ belongs to the manifold $U_{(S)}$ as desired.

\end{proof}

In the ring of polynomials of $u_1, \cdots, u_m,$ rationally expressed in terms of the coefficients in equation (\ref{eq1}), to each manifold $U_{(S)}$, there corresponds an ideal of polynomials given by the polynomials \marginpar{Page 128} which vanish on the manifold $U_{(S)}$.\footnote{The literal translation was ``there corresponds polynomial ideal representing the collection of polynomials vanishing on the manifold.''} Let this ideal be denoted by $A_{(S)}$. To determine whether a given polynomial
$$h(u_1, u_2, \cdots, u_m),$$ 
expressed in terms of the coefficients in equation (\ref{eq1}) is in $A_{(S)}$, we write $h$ in terms of the roots $x_1, x_2, \cdots, x_n$ of equation (\ref{eq1}) and set some of them equal according to the table of equalities in (\ref{eq4}). We then see if the polynomial vanishes after that.

If the variables $u_1, u_2, \cdots, u_m$ are included in the coefficients of equation (\ref{eq1}) linearly, the ideal will be prime. In fact, without loss of generality we can assume that then certain variables $u_1, u_2, \cdots, u_m$ are rationally (or even linearly) expressed in terms of the coefficients $a_1, a_2, \cdots, a_n$. Then if the product $g \cdots h$ belongs to $A_{(S)}$, we express $g$ and $h$ and the variables $u_1, u_2, \cdots, u_n$ in terms of $x_1, x_2, \cdots, x_n$ and set some of them equal according to the table of equalities in (\ref{eq4}). If in this case the product $g \cdots h$ vanishes, then at least one of its factors must vanish. This proves that the ideal $A_{(S)}$ is prime. 

Note that if, instead of the permutation $S$, we take any other permutation $T^{-1}ST$ in the same conjugacy class ($T \in G$), then we get the same ideal. This follows from the fact that the table of equalities for the permutation $T^{-1}ST$ is obtained from the table in (\ref{eq4}) by applying the permutation $T$. On the other hand, every rational function of $x_1, x_2, \cdots, x_n$ rationally expressed in terms of $u_1, u_2, \cdots, u_n$, does not change if the permutation $T$ is applied to it.

\begin{center}\section{Inertia Groups}\end{center}
 
The set of permutations of the roots of equation (\ref{eq1}) formed by all infinitesimal paths taken in the neighborhood of some point $P$ of the space $U$, we will call the inertia group of $P$, by analogy with algebraic theory (or rather, p-adic numbers). We would like to define an inertia group for a point $P$ in the manifold $U_{(S)}$. On one hand, it is clear that roots which coincide when following along a path near the point $P$ should coincide at the point $P$. It follows that any permutation of the inertia is not higher than one of the permutations of the conjugacy class $(S)$.  The converse is true only with certain conditions. For ease of study, first assume that the variables $u_1, u_2, \cdots, u_m$ are included linearly in the coefficients $a_1, a_2, \cdots, a_n$. In particular, this will occur if we let $U$ be the space of coefficients $a_1, a_2, \cdots, a_n$.

In this case, equation (\ref{eq1}) can we rewritten in the form
\begin{center}\begin{equation}\label{eq8} f_0(x) + f_1(x)(u_1-p_1) + f_2(x)(u_2-p_2) + \cdots + f_m(x)(u_m-p_m) = 0,\end{equation}\end{center}

\marginpar{Page 129} 
where $p_1, p_2, \cdots, p_m$ are the coordinates of the point $P$.  The polynomials $f_0(x), f_1(x), \cdots f_m(x)$ do not have a common factor, since the roots of all their common factors would be the roots of equation (\ref{eq1}) and at the same time would not depend on $u_1, u_2, \cdots, u_m$, which contradicts the irreducibility of equation (\ref{eq1}).

Suppose that the point $P$ is in $U_{(S)}$, but is not in any of the higher critical manifolds. Then the polynomial $f_0(x)$ will have constant roots whose equalities exactly correspond to table (\ref{eq4}). Denoting them by $b_1, b_2, \cdots, b_n$, we will thus have

\begin{equation}\label{eq9} \begin{rcases} b_1 = b_2 = \cdots = b_{\mu_1}\\
b_{\mu_1+1} = \cdots = b_{\mu_2}\\
\cdots \cdots \cdots \cdots \cdots \cdots\\
b_{\mu_{k-1}+1} = \cdots = b_n.\end{rcases}\end{equation}

where none of the values

$$b_1, b_{\mu_1+1}, \cdots, b_{\mu_{k-1}+1}$$

are equal to each other. Since the polynomials $f_0(x), f_1(x), \cdots, f_m(x)$ are relatively prime, we can choose constants $c_1, c_2, \cdots, c_m$ so that the polynomial

$$g(x) = c_1f_1(x) + c_2f_2(x) + \cdots + c_mf_m(x)$$

is relatively prime to $f_0(x)$, i.e. it does not vanish for any of the values from (\ref{eq9}). Let us produce a linear transformation of $u_1, u_2, \cdots, u_m$ such that

\begin{center}\begin{align*}
u_1 - p_1 &= c_1v_1\\
u_2 - p_2 &= c_2v_1 + v_2\\
\cdots \cdots \cdots \cdots \cdots \cdots\\
u_m - p_m &= c_mv_1 + v_m
\end{align*}\end{center}

Then equation (\ref{eq8}) can be rewritten as follows
\begin{center}\begin{equation}\label{eq10} f_0(x) + g(x)v_1 + f_2(x)v_2 + \cdots + f_m(x)v_m = 0.\end{equation}\end{center}

The variables $v_1, v_2, \cdots, v_m$ vanish at point $P$.  Assuming that 
\begin{center}\begin{equation}\label{eq11} v_2 = v_3 = \cdots = v_m = 0\end{equation}\end{center} 
we get
\begin{center}\begin{equation}\label{eq12} v_1 = -\frac{f_0(x)}{g(x)} = - \frac{(x-b_1)^{\mu_1}(x-b_2)^{\mu_2} \cdots (x - b_{\mu_{k-1}+1})^{\mu_k}}{g(x)}, \end{equation}\end{center}

where the denominator $g(x)$ is coprime to the numerator, i.e it does not evaluate to zero at $v_1 = 0$.  If in this equation we set 
\begin{center}\begin{equation}\label{eq13} v_1 = \rho e^{i\theta} \end{equation}\end{center}
where $\rho > 0$ is a very small number, and $\theta$ will take values between $0$ and $2\pi$. Then from the theorem on the continuity of the roots of algebraic equations, \marginpar{Page 130} it follows that $\mu_1$ from the roots of equation (\ref{eq12}) will be very close to $b_1$, $\mu_2$ to $b_2$, etc. In this case, as $\theta$ runs from $0$ to $2\pi$, each of the roots of these categories will be mapped into each other, and thus all the roots will undergo a permutation of the same cycle type as the permutation $S$.

At the same time, equalities (\ref{eq12}) and (\ref{eq13}) define a small circle around the point $P$ in the space $U$. Thus, we have proved the following theorem:

\begin{theorem}\label{Thm2} The inertia group of the point $P$ contains a permutation whose cycles correspond to the rows in the table of equalities of the roots for the point $P$. \end{theorem}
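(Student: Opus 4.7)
The geometric setup in equations (\ref{eq8})--(\ref{eq13}) already reduces the problem to a monodromy analysis along the small circle $v_1 = \rho e^{i\theta}$ in the one-parameter slice $v_2 = \cdots = v_m = 0$ through $P$. The plan is to finish by performing a local Puiseux computation at each distinct root $b_j$ of $f_0$, after first confirming that $\rho$ may be chosen so that the only branch point of (\ref{eq12}) inside $|v_1| \le \rho$ sits at $v_1 = 0$.

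For the local step, fix $j$ and factor $f_0(x) = (x - b_j)^{\mu_j}\tilde f_j(x)$ with $\tilde f_j(b_j) \ne 0$. Equation (\ref{eq12}) then reads, in a neighborhood of $b_j$,
\[
(x - b_j)^{\mu_j} \;=\; -\,v_1\cdot\frac{g(x)}{\tilde f_j(x)},
\]
where the right-hand factor $g/\tilde f_j$ is a holomorphic unit at $b_j$, because $g(b_j) \ne 0$ by the choice of $c_1,\ldots,c_m$ and $\tilde f_j(b_j) \ne 0$ by construction. Extracting a $\mu_j$-th root and applying the implicit function theorem yields a Puiseux expansion
\[
x - b_j \;=\; \zeta\bigl(-v_1\,g(b_j)/\tilde f_j(b_j)\bigr)^{1/\mu_j} + O\bigl(v_1^{2/\mu_j}\bigr),
\]
with $\zeta$ ranging over the $\mu_j$-th roots of unity. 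As $\theta$ traces $[0, 2\pi]$ and $v_1$ loops once around $0$, the $\mu_j$ branches clustered near $b_j$ undergo a single $\mu_j$-cycle; assembling over $j$ yields a permutation of the roots which is a product of disjoint cycles of lengths $\mu_1, \ldots, \mu_k$, whose supports match the rows of the equality table (\ref{eq4}). This permutation is realized by analytic continuation along an arbitrarily small closed loop based near $P$, so by definition it lies in the inertia group of $P$.

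The main obstacle I anticipate is the preliminary step: justifying that $\rho$ can be chosen so small that $v_1 = 0$ is the only branch point of (\ref{eq12}) inside the disk. Concretely, this amounts to showing that the $x$-discriminant of (\ref{eq12}), viewed as a polynomial in $v_1$, has an isolated zero at $v_1 = 0$. If instead some further coincidence of roots persisted on a punctured neighborhood of $0$, then $P$ itself would have to lie in a critical manifold strictly higher than $U_{(S)}$, contradicting the standing hypothesis in this section; I would make the contradiction precise by translating the persistent coincidence into the vanishing at $P$ of the coefficients of some $\Phi_T$ with $T > S$, in the sense of the preceding theorem.
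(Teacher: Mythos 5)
Your argument is correct and follows essentially the same route as the paper: Tschebotar\"{o}w sets up the identical one-parameter slice $v_2 = \cdots = v_m = 0$, the same small circle $v_1 = \rho e^{i\theta}$, and then asserts --- citing only the continuity of roots --- that the $\mu_j$ roots clustered near each $b_j$ are cyclically permuted as $\theta$ runs from $0$ to $2\pi$; your Puiseux computation at $b_j$ is precisely the rigorous justification of that assertion. On the preliminary point you flag about shrinking $\rho$, the ``contradiction via a higher critical manifold'' detour is unnecessary: since $g$ was chosen coprime to $f_0$ (and neither is constant), the Wronskian $f_0 g' - f_0' g$ is a nonzero polynomial in $x$, so the $x$-discriminant of equation (\ref{eq12}) is a nonzero polynomial in $v_1$ and has only finitely many zeros, whence $v_1 = 0$ is isolated among the branch points and any sufficiently small $\rho$ works.
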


We study what types of all permutations are in the inertia group of the point $P$.  If at a point $P$ there is a small intersection with critical manifolds lower than $U_{(S)}$, for example the manifold $U_{(S_1)}$, then any neighborhood of the point $P$ contains an infinite number of points for which $U_{(S_1)}$ is the highest of the critical manifolds on which they lie. This implies, by Theorem \ref{Thm2}, that in every neighborhood of the point $P$, there are ways in which the roots $x_1, x_2, \cdots, x_n$ undergo a permutation similar to the permutation $S_1$. Hence we have:

\begin{lemma}\label{Lem1} The inertia group of the point $P$ contains permutations of all cycle types corresponding to the table of equalities for the critical manifolds on which $P$ lies. \end{lemma}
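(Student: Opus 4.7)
The plan is to show, for each critical manifold $U_{(S_i)}$ passing through $P$, that a permutation of cycle type $S_i$ lies in the inertia group of $P$, by conjugating an inertia element at a nearby generic point of $U_{(S_i)}$ back to $P$ through an infinitesimal loop.

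First, I would fix one such $U_{(S_i)}$ and an arbitrarily small neighborhood $V$ of $P$. The key geometric input is that $U_{(S_i)}$ is an irreducible analytic subvariety (its defining ideal $A_{(S_i)}$ is prime by the discussion following Theorem 1), while the critical manifolds strictly higher than $U_{(S_i)}$ are proper analytic subvarieties of it. Consequently the set of points of $V\cap U_{(S_i)}$ at which $U_{(S_i)}$ itself is the highest critical manifold is dense in $V\cap U_{(S_i)}$; pick such a point $Q$.

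Next, I would apply Theorem \ref{Thm2} at $Q$: a small closed loop $\gamma$ encircling $Q$, constructed as in formulas (\ref{eq12})--(\ref{eq13}) along a line through $Q$ on which only $v_1$ varies, realizes a permutation $\sigma$ of the roots having the same cycle type as $S_i$. By shrinking $\rho$ in (\ref{eq13}) the loop $\gamma$ can be taken to lie inside $V$. I would then join $P$ to a point of $\gamma$ by a path $\alpha$ inside $V$ that meets the discriminant locus only at its endpoint on $\gamma$; this is possible because $V$ minus the discriminant locus is locally path-connected, and $\gamma$ itself can be chosen to avoid the discriminant except at the usual finite set of branch values.

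Finally, the composite $\alpha \gamma \alpha^{-1}$ is a closed path based at $P$, contained in $V$, and therefore infinitesimal in the sense used in the paper. Its associated permutation is conjugate to $\sigma$ in the group $G$, so it has the same cycle type as $S_i$. Since $V$ was arbitrary and $U_{(S_i)}$ was an arbitrary critical manifold through $P$, the inertia group of $P$ contains permutations of every cycle type arising from a critical manifold on which $P$ lies. The main obstacle is the first step: justifying the existence of the nearby ``generic'' point $Q$ at which no strictly higher critical manifold passes. Everything after that is essentially transport of a known inertia element along a path, which is routine.
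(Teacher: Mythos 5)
Your proposal is correct and follows essentially the same route as the paper: locate points $Q$ near $P$ on $U_{(S_i)}$ lying on no strictly higher critical manifold, apply Theorem~\ref{Thm2} at $Q$ to get a small loop realizing a permutation of the cycle type of $S_i$, and observe that this loop lies in an arbitrarily small neighborhood of $P$. The only real difference is that you supply a justification for the density of such generic points $Q$ (via irreducibility of $U_{(S_i)}$) that the paper leaves implicit, and you add a base-point-change $\alpha\gamma\alpha^{-1}$ that is not strictly needed given the paper's definition of the inertia group as permutations arising from any infinitesimal loop in a neighborhood of $P$.
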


The converse is also true. For the proof we need the following lemmas:

\begin{lemma}\label{Lem2} In each chain of critical manifolds

$$U_{(S_1)} \supset U_{(S_2)} \supset \cdots \supset U_{(S_r)},$$
 
the dimension of each subsequent manifolds is at least one less than the dimension of the previous one. \end{lemma}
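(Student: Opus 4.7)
The plan is to combine the irreducibility of each $U_{(S_i)}$, established in the previous section, with the standard principle that a proper closed subvariety of an irreducible algebraic variety has strictly smaller dimension.

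First, I would invoke the observation from the discussion after Theorem 1 that, when the parameters $u_1,\ldots,u_m$ enter linearly into the coefficients $a_1,\ldots,a_n$, the ideal $A_{(S)}$ is prime and hence $U_{(S)}$ is an irreducible algebraic subvariety of $U$. To handle the general (nonlinear) situation I would reduce to the linear case by pulling the chain back along the polynomial map $U \to \Spec \mathbb{C}[a_1,\ldots,a_n]$ whose image is the space of coefficients; the critical manifolds in $U$ are pullbacks of the critical manifolds in this larger ambient space (where the parameters do enter linearly), and the question of dimension differences can be transferred there.

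The heart of the argument is then purely algebro-geometric. Suppose $U_{(S_{i+1})} \subsetneq U_{(S_i)}$ is strict. Both are irreducible, so if they had equal dimension $d$, the proper irreducible closed subset $U_{(S_{i+1})}$ of the irreducible variety $U_{(S_i)}$ of the same dimension would be forced to equal $U_{(S_i)}$, contradicting strict containment. In ideal-theoretic language: $A_{(S_i)} \subsetneq A_{(S_{i+1})}$ are distinct primes in $\mathbb{C}[u_1,\ldots,u_m]$, the quotient $\mathbb{C}[u_1,\ldots,u_m]/A_{(S_i)}$ is an integral domain, and further quotienting by any nonzero element of $A_{(S_{i+1})}/A_{(S_i)}$ strictly lowers the transcendence degree of the fraction field, hence the Krull dimension, by at least one. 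Iterating across the chain yields $\dim U_{(S_{i+1})} \leq \dim U_{(S_i)} - 1$ at every step, which is the claim.

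The main obstacle I anticipate is giving a sufficiently elementary justification for the dimension-drop principle in the analytic/algebraic framework the paper is using, without relying on modern scheme-theoretic language. A concrete route would be to exhibit, for each strict inclusion, an explicit polynomial $h \in A_{(S_{i+1})} \setminus A_{(S_i)}$ and argue that on the irreducible manifold $U_{(S_i)}$ the additional equation $h = 0$ genuinely cuts down the complex (or real) dimension by one, using the continuity of roots already invoked in the proof of Theorem \ref{Thm2}. A secondary technical issue is tracking irreducible components when the parameters do not enter linearly, which I would sidestep via the reduction to the linear ambient space described above.
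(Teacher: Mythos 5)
Your core argument is exactly the paper's: the ideals $A_{(S_i)}$ are prime (under the linearity hypothesis, which is in force throughout this part of Section 2), and a strict chain of prime ideals has strictly decreasing dimension --- the paper simply cites this fact from van der Waerden rather than re-deriving it via transcendence degree, but the substance is identical. Your detour through the nonlinear case is unnecessary, since the lemma is only asserted under the standing linear assumption; the proposed pullback reduction would also require more care, as preimages under an arbitrary polynomial map need not preserve strict dimension drops.
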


\begin{proof} This follows from the fact that, as we saw in Section 1, the ideal corresponding to each critical manifold is a prime ideal. In fact there is a theorem [see van der Waerden \cite{Vd6}, p. 63], which states that two prime ideals, with one containing the other, only have the same dimension if the coincide. \end{proof}

\begin{corollary} The lowest critical manifold (that is, not contained in any other) has a complex dimension $\leq m-1$ \end{corollary}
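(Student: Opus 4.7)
The plan is to apply Lemma \ref{Lem2} (or rather the underlying van der Waerden result cited in its proof) with the whole ambient space $U$ serving as the ``zeroth'' term of the chain. Concretely, I would regard $U$ itself as the variety cut out by the zero ideal $(0)$, which is prime because the polynomial ring in $u_1,\ldots,u_m$ over $\C$ is an integral domain, and I would check that any critical manifold $U_{(S)}$ corresponds to a prime ideal $A_{(S)}$ strictly larger than $(0)$. Once this strict inclusion of prime ideals is established, the cited theorem forces $\dim U_{(S)} < \dim U = m$, so $\dim U_{(S)} \leq m-1$.

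The key point to verify is that $A_{(S)} \neq (0)$, i.e.\ that $U_{(S)}$ is a proper subvariety of $U$. For this I would exhibit an explicit nonzero element of $A_{(S)}$, namely the discriminant $D(u_1,\ldots,u_m)$ of equation (\ref{eq1}). Every critical manifold is contained in the locus $D=0$ by the remarks preceding (\ref{eq2}), so $D \in A_{(S)}$. And $D$ is not identically zero on $U$: because equation (\ref{eq1}) is irreducible over $\C(u_1,\ldots,u_m)$ and of degree $n\geq 2$, its roots are distinct as elements of the algebraic closure of the function field, so $D$ is a nonzero polynomial in the $u_i$.

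The primality of $A_{(S)}$ is the content of the discussion in Section 1 (under the assumption that the $u_i$ enter the coefficients linearly, which the corollary inherits from the setting of Section 2). Combining primality of $A_{(S)}$, primality of $(0)$, and the strict inclusion $(0) \subsetneq A_{(S)}$ witnessed by $D$, the van der Waerden dimension theorem invoked in the proof of Lemma \ref{Lem2} gives the desired strict dimension drop.

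The only step that requires any genuine care is the verification that $D\not\equiv 0$, and this is really just a restatement of the standing irreducibility hypothesis on (\ref{eq1}); everything else is a direct quotation of Lemma \ref{Lem2} with $U$ inserted at the top of the chain. Thus I do not anticipate a substantive obstacle — the corollary is essentially the one-step case of Lemma \ref{Lem2}.
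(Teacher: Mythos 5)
Your argument is correct and is exactly the one-step instance of Lemma \ref{Lem2}, which is clearly what the paper intends (it states the corollary without separate proof immediately after that lemma). The only detail you add is the explicit verification that $A_{(S)} \neq (0)$ via the nonvanishing of the discriminant $D$, which is the right thing to check and follows from irreducibility (hence separability in characteristic $0$) of equation (\ref{eq1}); implicitly one should also note that $S$ is a non-identity permutation so that $D$ really does lie in $A_{(S)}$.
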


We now consider the space $U$ as a $2m$-dimensional real space.  From what has just been proved it follows that the lower critical manifolds have dimension $\leq 2m-2$. Let $U$ be a point space belonging to one of the higher critical manifolds $U_{(S)}$ (perhaps not to the highest critical manifold), but not belonging to any of the other higher  \marginpar{Page 131} critical manifolds, that has a neighborhood containing the closed curve $C$, such that when following the curve $C$, the roots $x_1, x_2, \cdots, x_n$ undergo the permutation $S$. We will follow $C$ on a two-dimensional manifold $A$ (we stretch the film), and let it not intersect either $P$ or with any of the critical varieties which are higher than the lowest critical variety. This can always be achieved by small deformations of the manifold $A$, since, by virtue of Lemma \ref{Lem2}, every higher critical manifold has a real dimension $\leq 2m-4$. On the other hand, if $S$ is not the identity permutation, then $A$ necessarily intersects the lower critical manifolds. In fact, otherwise, dividing $A$ into arbitrarily small areas, we could consider the boundary of each of these areas as a closed curve located in a neighborhood of a non-critical point. Therefore, when going around each of these contours, the roots undergo the identity permutation. But since going along the path $C$ is equivalent to following these boundaries in succession, then going along $C$ would also make the same permutation on the roots, which contradicts the assumption.

We can assume that the manifold $A$ is algebraic. Then, using a small deformation, the number of intersections with the critical manifolds can be made finite.  Let these points of intersection be $P_1, P_2, \cdots, P_k$ and let closed loops around them (lying on $A$) correspond to lower permutations $S_1, S_2, \cdots, S_k.$ Obviously, they can be numbered in such a way that 
$$S_1 \cdots S_2 \cdot \cdots \cdot S_k = S.$$

This implies

\begin{lemma}\label{Lem3} All permutations of the inertia group of the point $P$ are the product of lower permutations corresponding to the lowest critical manifold in a neighborhood of the point $P$. \end{lemma}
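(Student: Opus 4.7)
The plan is to follow the strategy sketched in the paragraph immediately preceding the lemma: given a permutation $T$ in the inertia group of $P$, realize it by a closed curve $C$ in a small neighborhood of $P$, span $C$ by a two-dimensional surface $A$, and decompose the monodromy along $\partial A = C$ into contributions coming from the isolated intersections of $A$ with the codimension-two \emph{lowest} critical manifolds near $P$.

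First I would fix a closed path $C$ in a punctured neighborhood of $P$ whose monodromy is $T$. Since every closed path in $U$ is contractible, $C$ bounds a singular $2$-disk, and I would take $A$ to be a piecewise smooth (or, as the author suggests, algebraic) $2$-chain with $\partial A = C$. The key geometric input is Lemma~\ref{Lem2} together with the dimension count noted just before the lemma: every critical manifold strictly higher than the lowest ones has complex dimension at most $m-2$, hence real dimension at most $2m-4$. Because $\dim_\R A = 2$ in a space of real dimension $2m$, a transversality/general-position perturbation lets me assume $A$ misses $P$ itself and avoids all such higher critical manifolds. On the other hand, $A$ must meet the codimension-two lowest critical manifolds: if it did not, then by subdividing $A$ into small cells, each bounded by a loop lying in a neighborhood of a non-critical point (hence null-monodromic), the composition of the boundary monodromies would force $T$ to be trivial, contradicting our choice.

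Next, at each of the finitely many intersection points $P_1,\dots,P_k$ of $A$ with the lowest critical manifolds, I would draw a small loop $\gamma_i$ on $A$ encircling $P_i$. By construction each $P_i$ lies on exactly one lowest critical manifold and on no higher one, so Theorem~\ref{Thm2} identifies the monodromy $S_i$ of $\gamma_i$ with a permutation associated to a lowest critical manifold through $P_i$ (in the ordering of Section~1, these are the ``lower'' permutations). The boundary $\partial A = C$ is homotopic, inside $A \setminus \{P_1,\dots,P_k\}$, to a concatenation of the $\gamma_i$ in some order; since analytic continuation only depends on the homotopy class, one obtains a factorization $T = S_{i_1} S_{i_2} \cdots S_{i_k}$, which is exactly the statement of the lemma.

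The principal obstacle is the general-position step: I need to choose $A$ so that it simultaneously misses $P$, misses every higher critical manifold, and meets each lowest critical manifold only at smooth points and only one manifold at a time, all without altering the homotopy class of $\partial A$. In the real differentiable setting this is a standard transversality argument backed by the codimension bounds from Lemma~\ref{Lem2}; to justify the author's remark that $A$ may be taken algebraic, one would need a Bertini-type argument to produce an algebraic surface transverse to each stratum, a point the text elides. Once transversality is secured, the factorization $T = S_{i_1}\cdots S_{i_k}$ is the classical ``monodromy around punctures on a disk'' picture and requires no further argument.
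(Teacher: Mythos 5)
Your proposal is correct and follows essentially the same argument the paper gives in the paragraph immediately preceding the lemma: span the loop $C$ by a two-dimensional chain $A$, use the codimension bounds from Lemma~\ref{Lem2} to perturb $A$ off $P$ and off all higher critical manifolds, observe that $A$ must meet the lowest critical manifolds lest a subdivision argument force $T$ to be trivial, and then factor $T$ into the monodromies around the finitely many intersection points $P_1,\dots,P_k$. You identify the same gap (justifying that $A$ can be taken algebraic and transverse) that the paper treats as self-evident, which is a fair observation but does not constitute a departure from the paper's route.
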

 
From the proof of this lemma, it follows that
\begin{lemma}\label{Lem4} The  dimension of the lowest critical manifold is exactly equal to $2m-2$. \footnote{The literal translation says ``eigenvalue'' rather than ``dimension''.}\end{lemma}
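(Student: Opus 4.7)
The plan is to prove this by contradiction, reusing the dimension count that already appears in the proof of Lemma~\ref{Lem3}. By the corollary to Lemma~\ref{Lem2}, the lowest critical manifold has complex dimension at most $m-1$, i.e.\ real dimension at most $2m-2$, so it is enough to rule out the case in which every lowest critical manifold has real dimension $\leq 2m-3$.

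Suppose, for contradiction, that the lowest critical manifold has real dimension $\leq 2m-3$. Since the lowest manifolds are the maximal ones in the inclusion ordering, and Lemma~\ref{Lem2} forces strict decrease of (complex, hence real) dimension along a chain of inclusions, \emph{every} critical manifold then has real dimension at most $2m-3$. The plan is now to show that this collapses the inertia, contradicting the nontriviality of $G$: because equation~\eqref{eq1} is assumed irreducible of degree $n \geq 2$, the group $G$ contains a nontrivial permutation, so there exists some closed curve $C$ in $U$ along which the roots $x_1,\dots,x_n$ undergo a nontrivial permutation.

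Since $U$ is simply connected, $C$ bounds a real two-dimensional film $A$. Exactly as in the proof of Lemma~\ref{Lem3}, I would now apply a general-position deformation: the expected intersection dimension of $A$ (real dimension $2$) with any critical manifold in $U$ (real dimension $\leq 2m-3$) is
\[
2 + (2m-3) - 2m = -1 < 0,
\]
so after an arbitrarily small deformation inside $U$ I can assume $A$ is disjoint from every critical manifold. Then subdividing $A$ into arbitrarily small regions, each region lies in a neighborhood of a non-critical point and hence its boundary induces the identity permutation on the roots; composing these trivial permutations along the subdivision shows that the monodromy along $C = \partial A$ is also trivial. This contradicts the choice of $C$, proving that the lowest critical manifold must have real dimension exactly $2m-2$.

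The main obstacle, as in Lemma~\ref{Lem3}, is the transversality/general-position step: one must know that the critical manifolds are sufficiently tame (algebraic, so in particular real-analytic away from their singular locus) to justify pushing a two-real-dimensional film off a subset of real codimension $\geq 3$. Given the algebraic nature of the manifolds $U_{(S)}$ established in Section~1, this is standard, and the whole argument then amounts to observing that the dimension bound used in Lemma~\ref{Lem3} is actually sharp.
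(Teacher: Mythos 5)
Your proof is correct and matches the paper's own argument: the paper simply remarks that Lemma~\ref{Lem4} ``follows from the proof of'' Lemma~\ref{Lem3}, and then gives exactly your observation as the justification --- namely, that if the lowest critical manifold had real dimension less than $2m-2$, the two-dimensional film $A$ spanning a nontrivially-monodromic loop $C$ could, after a small deformation, be pushed off all the critical manifolds, and the subdivision argument from Lemma~\ref{Lem3} would then force the monodromy along $C$ to be trivial. You have merely made the implicit codimension count explicit and flagged the same general-position/algebraicity hypothesis that the paper also invokes when deforming $A$.
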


It is significant that if this were less than $2m-2$, then the manifold $A$, after a small deformation, would not have intersections with the critical manifolds.

In general, now we can refine Lemma \ref{Lem2}. The manifold $U_{(S_2)}$ is the intersection of either two different lower manifolds or two components \footnote{The literal translation is ``fields.''} of the same manifold $U_{(S_1)}$. Indeed, by Lemma \ref{Lem3}, the higher corresponding permutation is a product of lower permutations, and permutations corresponding to the two lower manifolds at the intersection form a higher critical \marginpar{Page 132} manifold of dimension $2m-4$. Continuing the argument, we obtain the following lemma:

\begin{lemma}\label{Lem5} In a chain of critical manifolds
$$U_{(S_1)} \supset U_{(S_2)} \supset \cdots \supset U_{(S_k)},$$
the real dimension of each is respectively $2m - 2, 2m-4, \cdots, 2m-2k.$\end{lemma}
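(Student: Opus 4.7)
The plan is to prove this by induction on $k$, with the base case $k=1$ being exactly Lemma \ref{Lem4}. The upper bound in Lemma \ref{Lem2} already gives one inequality (dimension drops by at least $2$ real units at each step), so the content is to show it drops by exactly $2$.

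For the inductive step, assume the claim for chains of length $k-1$ and consider $U_{(S_k)} \subset U_{(S_{k-1})}$. Pick a sufficiently generic point $P \in U_{(S_k)}$ that does not lie on any critical manifold strictly higher than $U_{(S_k)}$. By Lemma \ref{Lem3}, every element of the inertia group of $P$, and in particular any representative of $S_k$, factors as a product of permutations corresponding to the lowest critical manifolds passing through $P$. Since $S_k$ is strictly higher than $S_{k-1}$, at least one factor in this decomposition, say $T$, corresponds to a lowest critical manifold $U^*$ that contributes a new coincidence not already forced by membership in $U_{(S_{k-1})}$. Locally around $P$ we then have $U_{(S_k)} = U_{(S_{k-1})} \cap U^*$ (as germs of analytic manifolds), because belonging to both $U_{(S_{k-1})}$ and $U^*$ imposes precisely the coincidences of the product $S_{k-1} \cdot T$ in the conjugacy class of $S_k$.

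Now $U^*$ has real dimension $2m-2$ by Lemma \ref{Lem4}, and $U_{(S_{k-1})}$ has real dimension $2m-2(k-1)$ by the inductive hypothesis. I claim the intersection is proper at $P$, so that the real dimension drops by exactly $2$, yielding $2m-2k$. To justify properness, I reuse the construction in the proof of Theorem \ref{Thm2}: at $P$, one can build a real two-dimensional disk $A$ transverse to $U_{(S_k)}$ along which the parameter $v_1$ winds once around $0$ and produces a loop realizing $T$. This disk meets $U^*$ in an isolated point at $P$ while remaining inside $U_{(S_{k-1})}$, which exhibits $U^*$ as cutting a genuine real codimension $2$ subset out of $U_{(S_{k-1})}$ near $P$. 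Combined with the lower bound from Lemma \ref{Lem2}, this forces $\dim_{\R} U_{(S_k)} = 2m-2k$.

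The main obstacle is the transversality claim in the last step: a priori the intersection $U_{(S_{k-1})} \cap U^*$ could be larger than expected if $U^*$ already contained $U_{(S_{k-1})}$ or met it non-transversely. The genericity of $P$ inside $U_{(S_k)}$ together with the explicit path construction from Theorem \ref{Thm2}, which provides an honest $2$-real-dimensional slice through $P$ transverse to $U_{(S_k)}$ and meeting $U_{(S_{k-1})}$ only at $P$, rules this out; making this transversality argument precise (so that one really gets a local product structure rather than a mere set-theoretic inclusion) is where the bulk of the work lies.
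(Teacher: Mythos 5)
Your proposal follows essentially the same route as the paper: both invoke Lemma~\ref{Lem3} to express each higher permutation as a product of lowest permutations, realize each $U_{(S_i)}$ locally as a successive intersection with a new lowest critical manifold of real codimension $2$ (Lemma~\ref{Lem4}), and conclude that each link of the chain drops the real dimension by exactly $2$. The transversality and properness issue you flag at the end is exactly the point the paper itself leaves informal (``Continuing the argument\dots''), so your argument is faithful to the original both in idea and in level of rigor.
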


Assume that the inertia group of the point $P$ contains the permutation $S$, maybe not the highest for the point $P$. It follows from Lemma \ref{Lem3} that $S$ can represented as a product of lower permutations 
$$S = S_1 \cdot S_2 \cdot \cdots \cdot S_k.$$
This means that the point $P$ lies in the intersection of the lower critical manifolds. By Lemma \ref{Lem5}, their intersection forms a critical manifold of dimension $2m-2k$. Therefore, in a neighborhood of the point $P$there lie $\infty^{2m-2k}$ points of the manifold $U_{(S)}$, of which only at most $\infty^{2m-2k-2}$\footnote{Note by Tschebotar\"{o}w: When it comes to algebraic varieties, the use of the old notation $\infty^\nu$ gives the presentation great clarity and also cannot lead to misunderstanding} points lie to the highest critical manifold. Thus, in a neighborhood of the point $P$ there are points for which $U_{(S)}$ is the highest variety on which they lie. Comparing with Theorem \ref{Thm2}, we get the following theorem:

\begin{theorem}\label{Thm3} The inertia group of a point $P$ contains a permutation $S$ if and only if there are points in its neighborhood for with $U_{(S)}$ is the highest critical manifold. \end{theorem}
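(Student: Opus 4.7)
The plan is to prove the two implications separately, invoking Theorem \ref{Thm2} and Lemma \ref{Lem3} respectively; throughout I read the statement as identifying permutations up to the conjugacy class $(S)$, since $U_{(S)}$ depends only on this class.

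\emph{Sufficiency.} Suppose $Q$ lies in a neighborhood of $P$ and $U_{(S)}$ is the highest critical manifold through $Q$. First I would apply Theorem \ref{Thm2} to $Q$: its inertia group contains a permutation $S'$ whose cycles correspond to the equality pattern of the roots at $Q$, so $S' \in (S)$. The loop realizing $S'$ can be chosen to lie in any prescribed neighborhood of $P$ (by taking $Q$ close to $P$ and the loop small enough), and hence $S'$ lies in the inertia group of $P$ as well.

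\emph{Necessity.} Suppose $S$ lies in the inertia group of $P$. By Lemma \ref{Lem3}, write $S = S_1 S_2 \cdots S_k$ where each $S_i$ corresponds to one of the lowest critical manifolds $U_{(S_i)}$ passing through $P$, so $P$ lies in their intersection. The paragraph preceding Lemma \ref{Lem5} shows that such an intersection is the higher critical manifold associated with the product, namely (a component of) $U_{(S)}$, and Lemma \ref{Lem5} identifies its real dimension near $P$ as $2m - 2k$. The decisive step is then a dimension count: any $Q \in U_{(S)}$ lying in a strictly higher critical manifold $U_{(T)}$ with $(T) > (S)$ must sit in a manifold of real dimension at most $2m - 2(k+1)$ by another application of Lemma \ref{Lem5}. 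Since there are only finitely many classes $(T)$ dominating $(S)$, the ``bad'' locus of points where $U_{(S)}$ fails to be the highest manifold has strictly smaller dimension than $U_{(S)}$ itself; therefore every neighborhood of $P$ contains points $Q$ with $U_{(S)}$ as the highest critical manifold, completing the proof when combined with sufficiency.

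The main obstacle I foresee is identifying the local intersection $U_{(S_1)} \cap \cdots \cap U_{(S_k)}$ with $U_{(S)}$ cleanly near $P$: a priori that intersection might contain spurious irreducible components whose equality tables dominate that of $S$, which would inflate the ``bad'' locus. I would rely on the inductive argument preceding Lemma \ref{Lem5}, which ties intersections of lowest manifolds to products of the corresponding lower permutations, to control these extra components and make the dimension count rigorous.
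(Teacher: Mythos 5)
Your proposal is correct and follows essentially the same route as the paper: the direction ``nearby point with $U_{(S)}$ highest $\Rightarrow$ $S$ in the inertia group of $P$'' is the paper's Lemma~\ref{Lem1}, proved by applying Theorem~\ref{Thm2} to nearby points exactly as you do, and the converse is the paper's paragraph preceding Theorem~\ref{Thm3}, which uses the decomposition $S = S_1 \cdots S_k$ from Lemma~\ref{Lem3}, the dimension formula of Lemma~\ref{Lem5}, and the same dimension count ($\infty^{2m-2k}$ points of $U_{(S)}$ versus at most $\infty^{2m-2k-2}$ on strictly higher manifolds). Your closing caveat about spurious components in the local intersection is a real subtlety that the paper glosses over rather than resolves, so your proposal is, if anything, slightly more careful than the original.
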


We proceed to consider more general cases. Let the coefficients $a_1, a_2, \cdots, a_n$ in equation (\ref{eq1}) be arbitrary polynomials of $u_1, u_2, \cdots, u_m$, which we will consider as independent variables. For such a space, we can find both critical manifolds and inertia groups. Each point in the space $U$ corresponds to one point in the space $A$, and a closed path in the space $U$ corresponds to a closed path in the space $A$. Conversely, to each point in the space $A$ there are several points in the space $U$, and some of them can be multiple, and thus a closed path in the space $A$ can correspond to an open path in the space $U$.  In other words, several once-repeated closed paths in the space $A$ can fit in a closed path in $U$. 

A point in $U$ is critical only if it corresponds to a critical point in $A$. However, it can happen that a critical point of the space $A$ corresponds to ordinary points of the space $U$. This is the case if in order to produce a closed path in a neighborhood of some point \marginpar{Page 133} space $U$, we must go around one of the corresponding points of the space $A$ the number of times equal to the product of the orders of each of the permutations. Thus the number of critical manifolds contained in each other in the space $U$ does not exceed the number in the space $A$. Leaving a detailed analysis of the relationship between the spaces $U$ and $A$ until another case, I note that in the space $U$ non-prime polynomial ideals can correspond to critical manifolds.

If the parameters $u_1, u_2, \cdots, u_m$ in the coefficients of equation (\ref{eq1}) are subject to the algebraic equation

\begin{center} \begin{equation}\label{eq14} g(u_1, u_2, \cdots, u_m) = 0, \end{equation} \end{center}
we must select an algebraic surface in the space $U$ which is the algebraic surface defined by equation (\ref{eq14}), which we will consider as the parametric space $U_1$ of equation (\ref{eq1}). In the space $U_1$, only those closed paths whose points lie on the surface defined by (\ref{eq14}) will be considered, and the inertia groups of critical points will be determined only for such paths.  We note that in such spaces $U_1$, the monodromy theorem does not hold: there may exist a function on $U_1$ that is unique with respect to all infinitesimal closed paths, but is not unique in general. The classical (two-dimensional) theory of Riemannian surfaces gives examples of such functions.

In some cases, the variables $u_1, u_2, \cdots, u_m$ are independent but at the same time the Galois group $G$ of equation (\ref{eq1}) is given. Then the parametric space of equation (\ref{eq1}) is constructed as follows. We compose a function of the roots of equation (\ref{eq1}), which belongs to the group $G$. We denote it by $u_{m+1}$. Assume that in the field of rational functions of $u_1, u_2, \cdots, u_m$, it satisfies the irreducible equation

\begin{center} \begin{equation}\label{eq15} g(u_1,u_2,\cdots,u_m, u_{m+1}) = 0.\end{equation} \end{center}

We define the parametric space $U$ as the surface (\ref{eq15}) in $(m+1)$-dimensional space with coordinates $u_1, u_2, \cdots, u_m, u_{m+1}$.

Note that if we define a closed path in the space $U'$ with coordinates $u_1, u_2, \cdots, u_m$ (the projection of the space $U$) then in the space $U$ it will be closed only if, when it goes around the function $u_{m+1}$ it returns to its original value; in other words if the permutation $S$ made by the roots of equation (\ref{eq1}) when going around this path is contained in $G$. Otherwise, the closed path in $U$ can be represented as a $k$-fold path in $U'$, where $k$ is the smallest exponent for which 
$$S^k \subset G.$$

\begin{center} \section{The problem of resolvents in various formulations} \end{center}

\marginpar{Page 134}

Let two algebraic equations 
\begin{center} \begin{equation}\label{eq16} f(x) = 0, \end{equation}
\begin{equation}\label{eq17} g(y) = 0 \end{equation}\end{center}
 be given of the same degree $n$ with isomorphic Galois groups $G$ and $\overline{G}$.  We will consider the $p$-rationality domain formed by the union of the fields formed by the coefficients and also the functions \footnote{N.b. presumably he means the invariant functions} of the roots of the equation (\ref{eq16}), (\ref{eq17}) belonging to the group $G, \overline{G}$. We have

\begin{theorem}\label{Thm4} The roots of equation (\ref{eq16}), (\ref{eq17}) are in rational dependence given by

\begin{center}\begin{equation}\label{eq18} y_i = \alpha_0 + \alpha_1x_i + \cdots + \alpha_{n-1}x_i^{n-1} \text{  } (i = 1,2, \cdots, n), \end{equation} \end{center}

if and only if the equation 

\begin{center}\begin{equation}\label{eq19} F(u) = 0, \end{equation}\end{center} \footnote{He does not say what $F$ is, but he says what its roots are, which is sufficient to construct $F$.}

whose roots are given by 

\begin{center}\begin{equation}\label{eq20} u = \sum_{k=1}^{n-1} t_k(x_1^ky_z + x_2^ky_x + \cdots + x_n^ky_n), \end{equation} \end{center}

has at least one rational root, where  $t_1, t_2, \cdots, t_n$ are undefined variables (which we will include into the field of rationality). \end{theorem}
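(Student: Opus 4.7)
The plan is to split the biconditional into two implications. The ``only if'' direction reduces to a symmetric-function manipulation on the $x_i$; the ``if'' direction recovers the $\alpha_j$ from a hypothesized rational root of $F$ by Lagrange interpolation, with the rationality of the $\alpha_j$ forced by a Hankel/Vandermonde invertibility argument.

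For the forward implication, I would substitute $y_i = \alpha_0 + \alpha_1 x_i + \cdots + \alpha_{n-1} x_i^{n-1}$ directly into (\ref{eq20}) and interchange the two sums to get
\[
u \;=\; \sum_{k=1}^{n-1} \sum_{j=0}^{n-1} \alpha_j\, t_k\, p_{k+j}, \qquad p_m := \sum_{i=1}^n x_i^m.
\]
Each power sum $p_m$ is symmetric in $x_1,\ldots,x_n$ and, by Newton's identities, is a polynomial in the coefficients of $f$, hence lies in the rationality domain. Since the $\alpha_j$ are assumed rational and the $t_k$ are adjoined to that domain, $u$ itself is rational. As $u$ is by construction one of the roots of $F$, this produces the required rational root.

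For the reverse implication, suppose $F$ has a rational root $u_0$. The key observation is that the roots of $F$ are precisely the expressions $\sum_{k=1}^{n-1} t_k \sum_i x_i^k y_{\pi(i)}$ as $\pi$ ranges over the pairings of indices induced by the combined Galois action of $G$ and $\overline{G}$. Thus $u_0 = \sum_{k=1}^{n-1} t_k\, s_k^{(\pi)}$ with $s_k^{(\pi)} = \sum_i x_i^k y_{\pi(i)}$ for some particular $\pi$, and because the $t_k$ are transcendentally independent over the base rationality domain, coefficient comparison forces each $s_k^{(\pi)}$, $k=1,\ldots,n-1$, into that domain; the remaining quantity $s_0^{(\pi)} = \sum_i y_i$ lies there automatically as a symmetric function of the roots of $g$. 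I then define $\alpha_0,\ldots,\alpha_{n-1}$ uniquely by the interpolation conditions $y_{\pi(i)} = \sum_j \alpha_j x_i^j$ for $i=1,\ldots,n$, which is a solvable Vandermonde system since the $x_i$ are distinct. Multiplying the $i$-th equation by $x_i^k$ and summing over $i$ produces the $n \times n$ linear system $\sum_{j=0}^{n-1} \alpha_j\, p_{k+j} = s_k^{(\pi)}$ for $k = 0, 1, \ldots, n-1$. Its coefficient matrix $H = (p_{k+j})$ factors as $V^{T} V$ for the Vandermonde $V = (x_i^j)$, so $\det H$ equals the discriminant of $f$, nonzero by irreducibility; both $H$ and $H^{-1}$ have entries in the rationality domain, so $\alpha = H^{-1} s$ lies there as well.

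The most delicate step I expect is the initial identification of the roots of $F$ with the pairing-indexed expressions above, so that a rational root of $F$ really does pin down a permutation $\pi$ to interpolate against. This amounts to understanding how the combined Galois action of $G$ and $\overline{G}$ moves $u$, and in particular to verifying that the stabilizer of $u$ is a diagonal copy of $G$ inside $G \times \overline{G}$. Once that is in hand, the Hankel-matrix argument and the coefficient comparison in the polynomial ring $K'[t_1,\ldots,t_{n-1}]$ over the base rationality domain $K'$ are routine.
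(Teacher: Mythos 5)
Your overall scaffolding matches the paper's: the ``only if'' direction is the same power-sum substitution, and the ``if'' direction hinges on recovering the $\alpha_j$ from the linear system relating $\sum_j \alpha_j p_{k+j}$ to the rationalized quantities $s_k^{(\pi)}$ (the paper's system (\ref{eq22})). Where you diverge is in \emph{how} you rationalize the $s_k^{(\pi)}$. Tschebotar{\"o}w constructs $F$ explicitly as $\prod_{S\in G}(X-u^S)$ and then argues Galois-theoretically: he proves (via the Vandermonde determinant) that, provided neither $f$ nor $g$ has multiple roots, the stabilizer of $u$ in $G\times\overline G$ is exactly the diagonal $G\overline G$; a rational root of $F$ then forces the Galois group of the compositum into a conjugate of $G\overline G$, and after renumbering the $y_i$ all the $u_k$ become invariant, hence rational. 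You instead compare coefficients of the algebraically independent $t_k$: since $u_0 = \sum_k t_k s_k^{(\pi)}$ is a \emph{polynomial} in the $t_k$ with coefficients in the splitting field $L$, its membership in $K'(t_1,\dots,t_{n-1})$ pins each $s_k^{(\pi)}$ down to $K'$ directly. That is a genuine shortcut: it bypasses the stabilizer lemma entirely, whereas the paper needs it to identify the Galois group with $G\overline G$. (The stabilizer lemma is still useful in the paper for other purposes --- it shows the $u^S$ are distinct so $F$ is separable, and it underlies the later invertibility remark --- but it is not needed for the reverse implication once one argues via transcendence.) Your Hankel factorization $H = V^{\mathsf T}V$, $\det H = \mathrm{disc}(f)$, is a more explicit version of the paper's ``solving with respect to $\alpha_0,\dots,\alpha_{n-1}$'' and is perfectly sound. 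The one place you flag a worry --- ``the initial identification of the roots of $F$ with the pairing-indexed expressions'' --- is in fact not delicate: $F$ is simply defined as the product over $S\in G$ of $X-u^S$, so the identification holds by construction and the rational root $u_0$ automatically equals $u^S$ for some $S$, yielding the permutation $\pi = S^{-1}$ you need. Once you take that construction as the definition (as the theorem's footnote hints), your argument closes.
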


\begin{proof} If there are dependencies (\ref{eq18}), where $\alpha_0, \alpha_1, \cdots, \alpha_{n-1}$ are rational quantities, then substituting (\ref{eq18}) into (\ref{eq20}), we obtain for each of the quantities

\begin{center}\begin{equation}\label{eq21} u_k = x_1^ky_1^k + x_2^ky_2 + \cdots + x_n^ky_n \text{  } (k = 0, 1, 2, \cdots, n-1) \end{equation} \end{center} 
the expression

\begin{center}\begin{equation}\label{eq22} u_k = \alpha_0s_k + \alpha_1s_{k+1} + \cdots + \alpha_{n-1}s_{n+k-1}, \end{equation}\end{center}
where $s_m$ is sum of the $m$-th powers of the roots of equation (\ref{eq16}), i.e. rational values of formula (\ref{eq22}). It follows that in this case $u_k$, and hence $u$ as well, are rational values.

Now suppose that the conditions of the theorem are satisfied. In order to make equation (\ref{eq19}), with root $u$, let us use $S$ and $\overline{S}$ to denote the formally identical permutations made respectively over the roots $x_1, x_2, \cdots, x_n$ and $y_1, y_2, \cdots, y_n$.  Producing expression (\ref{eq20}) and permutation $S$, or equivalently permutation $\overline{S}^{-1}$ [since the expression (\ref{eq20}) is invariant under permutations $S\overline{S}$], we will send $u$ to $u^S$. Forcing $S$ to be in the group $G$, we obtain quantities, of which the elementary symmetric functions \marginpar{Page 135} are symmetric with respect to each of the systems $x_1, x_2, \cdots, x_n$ and $y_1, y_2, \cdots, y_n$, whereby the quantities
$$u^S = \sum_k t_ku_k^S$$
are the roots of equation (\ref{eq19}), whose coefficients are rational. 

The Galois group of the field formed by all roots $x_i, y_i$ is a subgroup of the direct product $G \times \overline{G}$. In this field, it is either the group of composed of products of $S\overline{S}$ (we denote this group by $G\overline{G}$) or to its supergroup. However, if equations (\ref{eq16}) and (\ref{eq17}) do not have multiple roots, then there are not permutations other than $S\overline{S}$ that would leave $u$ invariant, in other words, all $u_k$. Indeed, suppose there is such a permutation, $S_1\overline{S_2} (S_1 \neq S_2)$. Multiplying it by $(S_1\overline{S_1})^{-1}$, we get the permutation
$$\overline{S_2}\overline{S_1}^{-1} = \overline{S_3} \neq 1,$$
under which all $u_k$ will be invariant. Assuming that 
$$\overline{S_3} = \begin{pmatrix} 1 & 2 & \cdots & n\\
\alpha_1 & \alpha_2 & \cdots & \alpha_n\end{pmatrix}$$
we get the equalities 
\begin{center} $u_k - u_k^{\overline{S_3}} = x_1^k(y_1 - y_{\alpha_1}) + x_2^k(y_2 - y_{\alpha_2}) + \cdots + x_n^k(y_n - y_{\alpha_n}) = 0.$\\
$(k = 0, 1, \cdots, n-1).$\end{center}

Considering them as a system of homogeneous linear equations of $y_k - y_{\alpha_k}$ with a non-zero determinant [Vandermonde determinant of the roots of equation (\ref{eq16})] we obtain

$$y_k - y_{\alpha_k} = 0 \text{ } (k = 1,2, \cdots, n),$$
which contradicts the fact that equation (\ref{eq17}) has no multiple roots.

Hence it follows that 

$$u = \sum_k t_k u_k$$

belongs to the groups $G\overline{G}$. If equation (\ref{eq19}) has a rational root, then this means that Galois group  of the field formed by the roots $x_1, x_2, \cdots, x_n$ and $y_1, y_2, \cdots, y_n$ is one of the groups associated with $G\overline{G}$. By changing the numbering of the roots $y_1, y_2, \cdots, y_n$, we can make it equal to $G\overline{G}$. Then all the quantities in $u_k$ will be rational. Solving with respect to $\alpha_0, \alpha_1, \cdots \alpha_{n-1}$ in the system of equations in (\ref{eq22}), we get rational expressions for $\alpha_0, \alpha_1, \cdots, \alpha_{n-1}$. Then formulas (\ref{eq18}) will satisfy the requirements of the theorem as desired. 

\end{proof}

\begin{note} \marginpar{Page 136} If equation (\ref{eq17}) does not have multiple roots, then formulas (\ref{eq18}) are invertible. This follows, for example form Theorem 47 of my ``Foundations of Galois Theory.'' (\cite{Ts4}) \end{note}

We will equations (\ref{eq16}) and (\ref{eq17}) \textbf{transformable equations} if there is a rational relationship between their roots of the type in (\ref{eq18}).

The resolvent problem posed in various formulations by Klein (\cite{Kl3}) and Hilbert (\cite{Hi1}, \cite{Hi2}) can be posed in the following form which is more general and at the same time more easily formulated: An equation is given whose coefficients depend on $m (\leq n)$ parameters. We want to find an equation that can be converted into it whose coefficients depend on the smallest possible number of parameters (which we denote by $s$).

The coefficients (or parameters) of equation (\ref{eq16}) and (\ref{eq17}) may not depend rationally on each other. The task is to establish algebraic dependencies between them so that two conditions are met:

\begin{enumerate}[1)] 

\item Transformation (\ref{eq18}) must be invertible. We have seen that this condition is always satisfied if equations (\ref{eq16}) and (\ref{eq17}) do not have multiple roots.

\item Equation \ref{eq19} must have a root which depends rationally on the coefficients of equations (\ref{eq16}) and (\ref{eq17}), as well as on functions of their root belonging to the groups $G$ and $\overline{G}$ respectively.

\end{enumerate}

Compare the above formulation of the problem of resolvents with the formulations proposed by Klein and Hilbert.

\begin{prob*}[Klein's Problem] Using equation (\ref{eq16}), find an equation (\ref{eq17}) that depends on the smallest possible number $s$ of parameters so that the dependencies in (\ref{eq18}) between their roots are such that the coefficients $\alpha_0, \alpha_1, \cdots, \alpha_{n-1}$ rationally depending on the coefficients of equation (\ref{eq16}). \footnote{This is equal to essential dimension.}  \end{prob*} 

As a generalization of Klein's problem, we allow of the dependence of $\alpha_0, \alpha_1, \cdots, \alpha_{n-1}$ on some pre-set irrationalities.

Klein's problem is more particular in that it imposes more requirements than Hilbert's problem.

\begin{prob*}[Hilbert's Problem] Using equation (\ref{eq16}), find an equation (\ref{eq17}) (we will call it the resolvent), depending on the smallest number ($s$) of parameters, so that the coefficients $\alpha_0, \alpha_1, \cdots, \alpha_{n-1}$ of the dependencies in (\ref{eq18}) between their roots were determined using the auxiliary equation, which, in turn, should have a resolvent with $\leq s$ parameters, and the dependence coefficients between the roots of the latter should again depend on the roots of the equation admitting a resolvent with $\leq s$ parameters, and so on. The number of auxiliary equations obtained in this case should be finite. \footnote{This is equal to resolvent degree} \end{prob*}

Let us prove that Hilbert's problem is a special case of the problem we formulated. It suffices to consider the case \marginpar{Page 137} when $G$ is a simple group. In fact, if $G$ has a normal divisor $H$, then we must first solve Hilbert's problem for an auxiliary equation whose Galois group is isomorphic to the quotient group $G/H$. Then, considering this equation auxiliary and attaching its roots to the domain of rationality, we lower the Galois group of the given equation to $H$. Thus the number $s$ in the Hilbert problem for equations with Galois group $G$ is equal to the largest of the numbers $s$ in the Hilbert problem for equations with Galois groups $H$ and $G/H$.

Suppose that equation (\ref{eq16}) with a simple Galois group $G$ has a $s$-parametric equation (\ref{eq17}) in the Hilbert sense. Let us prove that (\ref{eq17}) will also be a resolvent in our sense. Suppose to the contrary: let equation (\ref{eq19}) have no rational root, where we consider the composite fields of the coefficients of equations (\ref{eq16}) and (\ref{eq17}) to be the rationality domain. In this rationality domain, the field formed by the roots of equations (\ref{eq16}) and (\ref{eq17}) has a Galois group which is a subgroup of the direct product $G \times \overline{G}$, where therefore is not contained in the group $G\overline{G}$ formed by permutations of type $S\overline{S}$, nor in any of the $G\overline{G}$ groups.

First of all, we prove that the group of equation (\ref{eq16}) does not decrease if the coefficients of equation (\ref{eq17}) are added to the rationality domain. Indeed, otherwise this field would contain a natural irrationality belonging to the real subgroup $G$. Its intersection with all conjugate subgroups is the identity group, by virtue of which this irrationality will be the root of the equation whose Galois group is isomorphic to $G$. Thus, the field of the roots of this equation should be regarded as an auxiliary (the field formed by the quantities $\alpha_0, \alpha_1, \cdots, \alpha_{n-1}$, should contain it), containing the starting roots of equation (\ref{eq16}). This means that the resolvent for the auxiliary equation will have no fewer parameters than the resolvent (\ref{eq17}), and the new auxiliary equation will again contain the same irrationality etc., so that the process never ends.  

Switching the roles of equations (\ref{eq16}) and (\ref{eq17}), we will prove that in the field of rationality formed by the coefficients of equations (\ref{eq16}) and (\ref{eq17}), the Galois group for both of these equations is isomorphic to $G$.

If the fields $K_1, K_2$, formed from the roots of equations (\ref{eq16}) and (\ref{eq17}) respectively, are coprime, that is, their intersection is a composite of coefficient fields, then the composite group $K_1 \times K_2$ is isomorphic to $G \times \overline{G}$. Since this group has no other normal divisors besides $G$ and $\overline{G}$, the group $G\overline{G}$ to which the root of equation (\ref{eq19}) belongs is not a normal divisor of the group $G \times \overline{G}$ \footnote{n.b. This is false without further assumptions, e.g. $G = \overline{G} = \F_p$}, and moreover, and its intersection with the conjugate subgroups is the identity group. Therefore, the group of equation (\ref{eq19}) is isomorphic to $G \times \overline{G}$. Hence it follows that the roots of equations (\ref{eq16}) and (\ref{eq17}) are contained in the field \marginpar{Page 138} roots of equation (\ref{eq19}), so that the latter, being an auxiliary equation, has no simpler solution than each of equations (\ref{eq16}) and (\ref{eq17}).

The intersection of the fields $K_1$ and $K_2$ is a normal field, and therefore, inside each of the fields $K_1, K_2$ is a normal divisor of $G, \overline{G}$ respectively. By the simplicity of the latter group, if the intersection is not a composite of coefficient fields, it must equal each of the fields $K_1, K_2$.

The existence of a rational dependence of type (\ref{eq18}) between the roots of equations (\ref{eq16}) and (\ref{eq17}) does not always follow from this. In particular, it does not hold if and only if $G$ admits several different (i.e. not changing each other by changing the order of the numbers) representations in the form of a transitive permutation group of $n$ numbers, that is, if $G$ has outer automorphisms. For example, this is the case when $G$ is a alternating group of $6$ numbers. However, in this case our statement remains valid. Namely, if equations (\ref{eq16}) and (\ref{eq17}) are such that the fields formed by their roots $K_1$ and $K_2$ give the same result, then we can transform the resolvent (\ref{eq17}) so that after the transformation, there are rational equations are established like (\ref{eq18}) between the roots of (\ref{eq16}) and (\ref{eq17}). For this purpose we need to construct an equation of degree $n$ whose root, being an element of the field $K_2$, would belong to the field $K_1$, which contains the roots of equation (\ref{eq16}). The equation constructed in this way, being convertible into equation (\ref{eq16}), at the same time remains $s$-parametric. 

The question of the conditions under which both problems are equivalent requires a special study.

\begin{center} \section{Conditions necessary for the existence of a resolvent} \end{center}

In this paper we restrict ourselves to the case when the region of rationality of the resolvent (\ref{eq17}) coincides with the rationality domain of equation (\ref{eq16}). Let the coefficients of equation \ref{eq16} depend rationally on the parameters $u_1, u_2, \cdots, u_m$, and let the $s$ parameters of the resolvent (\ref{eq17}), $v_1, v_2, \cdots, v_s$ be entire rational functions of $u_1, u_2, \cdots, u_m$. \footnote{The literal translation is ``Let the $s$ parameters of the resolvent (\ref{eq17}) $v_1, v_2, \cdots, v_s$ are given with them, $u_1, u_2, \cdots, u_m$ in such a correspondence, what are their entire rational functions.''}   Then to each closed path in the space 
$$U(u_1, u_2, \cdots, u_m)$$
will correspond a well-defined closed path of the space 
$$V(v_1,v_2, \cdots, v_s).$$
If in this case, the first of the paths is in a neighborhood of any one point (i.e. it is infinitely small), then the second path is also in a neighborhood of a certain point.

If around a closed path $C$ in a neighborhood of the point $P$, the roots of equation (\ref{eq16}) under a permutation $S$, then passing along \marginpar{Page 139} the corresponding path $\overline{C}$ of $V$, the roots of equation (\ref{eq17}) undergo a permutation $\overline{S}$, which differs from $S$ only by other permuted objects. In fact, let the rational root of equation (\ref{eq19}) have the form 
$$\sum_k t_ku_k,$$
where
$$u_k = x_1^ky_1 + x_2^ky_2 + \cdots + x_n^ky_n, \text{  } (k  = 0, 1, \cdots, n-1).$$

If when taking the path $C$, the roots $x_1, x_2, \cdots, x_n$ undergo the permutation

$$S = \begin{pmatrix} x_1 & x_2 & \cdots & x_n\\
x_{\alpha_1} & x_{\alpha_2} & \cdots & x_{\alpha_n}\end{pmatrix},$$

and when taking the path $\overline{C}$, the roots $y_1, y_2, \cdots, y_n$ undergo the permutation

$$\overline{S_1} = \begin{pmatrix} y_1 & y_2 & \cdots & y_n\\
y_{\beta_1} & y_{\beta_2} & \cdots & y_{\beta_n}\end{pmatrix},$$

then, since the function $u_k$ should be

$$x_{\alpha_1}^ky_{\beta_1} + x_{\alpha_2}^ku_{\beta_2} + \cdots + x_{\alpha_n}^ky_{\beta_n} = x_1^ky_1 + x_2^ky_2 + x_n^ky_n \text{ } \footnote{I think this might be a typo and there should be $+ \cdots +$ inserted here} (k = 0, 1, \cdots, n-1),$$
from which we get 
\begin{center}\begin{equation}\label{eq23} x_1^k(y_1 - y_{\gamma_1}) + x_2^k(y_2 - y_{\gamma_2} + \cdots + x_n^k(y_n - y_{\gamma_n}) = 0, \text{ }(k = 0, 1, \cdots, n-1),\end{equation}\end{center}
where
$$\begin{pmatrix} \alpha_1 & \alpha_2 & \cdots & \alpha_n\\
\beta_1 & \beta_2 & \cdots & \beta_n\end{pmatrix} = \begin{pmatrix} 1 & 2 & \cdots & n\\
\gamma_1 & \gamma_2 & \cdots & \gamma_n\end{pmatrix},$$
from which we get 
$$S_1 = \begin{pmatrix} 1 & 2 & \cdots & n\\
\beta_1 & \beta_2 & \cdots & \beta_n\end{pmatrix} = \begin{pmatrix} 1 & 2 & \cdots & n\\
\alpha_1 & \alpha_2 & \cdots & \alpha_n \end{pmatrix}\begin{pmatrix} \alpha_1 & \alpha_2 & \cdots & \alpha_n\\
\beta_1 & \beta_2 & \cdots & \beta_n\end{pmatrix} = S\begin{pmatrix} 1 & 2 & \cdots & n\\
\gamma_1 & \gamma_2 & \cdots & \gamma_n\end{pmatrix},$$
i.e. 
$$\begin{pmatrix} 1 & 2 & \cdots & n\\
\gamma_1 & \gamma_2 & \cdots & \gamma_n\end{pmatrix} = S^{-1} \cdot S_1.$$

But since the path $C$ extends non-critical points of the space $U$, the determinant of the system of homogeneous linear equation (\ref{eq23}) on the path $C$ does not vanish anywhere, and so we get 
\begin{center}\begin{equation}\label{eq24} y_1 = y_{\gamma_1}, \text{ } y_2 = y_{\gamma_2}, \cdots, y_n = y_{\gamma_n}.\end{equation}\end{center}

On the other hand, the dependencies (\ref{eq18}) between the roots of equations (\ref{eq16}) and (\ref{eq17}) are assumed to be invertible, from which we can conclude that the path from the roots of equation (\ref{eq17}) is also multiple.  Therefore, from the equations (\ref{eq24}) we have 

$$\gamma_1 = 1, \gamma_2 = 2, \cdots, \gamma_n = n,$$

and hence 
$$S_1 = S.$$

So we arrive at the theorem:

\begin{theorem}\label{Thm5} \marginpar{Page 140} If the parameters of equation (\ref{eq17}) are entire rational functions of the parameters of equation (\ref{eq16}) and the equations are transformable into each other, then the corresponding points of the spaces formed by the parameters of equation (\ref{eq17}) have inertia groups containing subgroups that are isomorphic to the inertia groups of the points in the space of parameters of equation (\ref{eq16}). \end{theorem}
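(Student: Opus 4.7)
The plan is to read Theorem \ref{Thm5} as saying that the polynomial map $\pi\colon U\to V$ defined by $v_j=v_j(u_1,\dots,u_m)$ induces, for each point $P\in U$ with image $\overline P=\pi(P)\in V$, an injective homomorphism from the inertia group of $P$ into the inertia group of $\overline P$. Once this is set up, essentially all of the content has already been assembled in the paragraphs preceding the theorem; the proof becomes a matter of packaging that computation as a group homomorphism and checking injectivity.

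First, I would observe that because the $v_j$ are entire rational (i.e.\ polynomial) functions of the $u_i$, the map $\pi$ is continuous and sends closed paths in $U$ to closed paths in $V$; in particular it sends any infinitesimal loop $C$ around $P$ to an infinitesimal loop $\overline C=\pi(C)$ around $\overline P$. This produces a well-defined map $\varphi\colon I_P\to I_{\overline P}$ between inertia groups: a permutation $S\in I_P$ arising from a loop $C$ is sent to the permutation $\overline S\in I_{\overline P}$ arising from $\overline C$. The map $\varphi$ is a homomorphism because concatenation of loops in $U$ corresponds under $\pi$ to concatenation of their images in $V$. One should also note that $\varphi$ is independent of the choice of loop $C$ representing $S$: two loops giving the same permutation on the $x_i$ give, via the invertible rational relation (\ref{eq18}), the same permutation on the $y_i$, hence the same inertia element at $\overline P$.

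Next I would identify $\varphi(S)$ explicitly. This is exactly what the calculation immediately before the theorem does: the rational root $\sum_k t_k u_k$ of (\ref{eq19}) forces, by the non-vanishing of the relevant Vandermonde determinant along $C$, the identity $y_i = y_{\gamma_i}$ for $\gamma = S^{-1}S_1$; and, since the root relation (\ref{eq18}) is invertible, the path traced by the $y_i$ is not multiple, which forces $\gamma_i = i$ and hence $S_1 = S$ after the identification of indices given by (\ref{eq18}). So the map $\varphi$ really is the ``same permutation'' map under the bijection between $\{x_i\}$ and $\{y_i\}$ induced by (\ref{eq18}).

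Injectivity of $\varphi$ then follows immediately: if $\varphi(S)=\overline 1$, i.e.\ the loop $\overline C$ fixes every $y_i$, then because each $x_i$ is (by invertibility of (\ref{eq18})) a rational function of $y_1,\dots,y_n$ and the coefficients of (\ref{eq16}) and (\ref{eq17}), $C$ must also fix every $x_i$, so $S=1$. Therefore $\varphi(I_P)$ is a subgroup of $I_{\overline P}$ isomorphic to $I_P$, which is the conclusion of the theorem. The only delicate point, and the one I would take the most care with, is justifying that an infinitesimal loop around $P$ in $U$ really does map to a loop contained in an arbitrarily small neighborhood of $\overline P$ in $V$ (so that $\varphi(S)$ genuinely lies in the inertia group of $\overline P$ and not merely in the monodromy group of $V$); for polynomial $\pi$ this is a direct continuity/compactness statement, but it is the one geometric input beyond the algebraic calculation inherited from the preceding pages.
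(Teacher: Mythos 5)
Your proposal is correct and follows essentially the same route as the paper: you rely on the same two inputs (polynomial maps send infinitesimal loops to infinitesimal loops, and the Vandermonde computation with the rational root of (\ref{eq19}) forcing $S_1=S$ once (\ref{eq18}) is invertible). The only difference is presentational --- you package the conclusion as an explicit homomorphism $\varphi\colon I_P\to I_{\overline P}$ and add a separate injectivity check, whereas the paper just establishes $S_1=S$ directly, which already makes injectivity automatic.
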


Consider the case when the coefficients of equation (\ref{eq16}) are linear in terms of the parameters $u_1, u_2, \cdots, u_m$. Let equation (\ref{eq17}) be transformable into equation (\ref{eq16}), if we put polynomials $v_1, v_2, \cdots, v_s$ instead of the parameters ; $v_1 = \varphi(u_1, u_2, \cdots, u_m), \cdots, v_s = \varphi_s(u_1, u_2, \cdots, u_m).$  It follows from Theorem 5 that the critical manifolds of the space $U$ correspond to the points of the critical manifolds of the space $V$. If we agree to consider as admissible only the closed paths in the space $V$  corresponding to the closed paths in the space $U$, then by Theorem 5 the inertia groups of the corresponding points are isomorphic. 

Let the space $U$ contain a chain of critical manifolds 

$$U_{(S_1)} \supset U_{(S_2)} \supset \cdots \supset U_{(S_q)}.$$

Let there be correspond manifolds in the space $V$

$$V_1 \supset V_2 \supset \cdots \supset V_q.$$

Let us prove that these manifolds have different dimensions. On the one hand, no pair of neighboring manifolds $V_i, V_{i+1}$ can coincide. Indeed, let $P$ be a point of the space $U$ that belongs to the manifold $U_{(S_{i+1})}$ but does not belong to any higher manifold. By virtue of Theorem (\ref{Thm3}), in its neighborhood there are points $P'$ of the manifold $U_{S_i}$ not belonging to the manifold $U_{S_{i+1}}$, and therefore the groups of inertia do not coincide with the inertia group of the point $P$ (the order of the latter is higher). Suppose that in the space $V$, the points $P, P'$ correspond to points $Q, Q'$. Under our condition regarding admissible closed paths, it follows from Theorem \ref{Thm5} that the inertia groups of the points $Q$ and $Q'$ are different. Both of them lie on the manifold $V_i$, but at the same time the point $Q$ lies on the manifold $V_{i+1}$ but the point $Q'$ does not. 

On the other hand, the manifolds $V_i$ correspond to prime ideals $B_i$. In fact, to determine whether a polynomial $g(v_1, v_2, \cdots, v_s)$ is in the ideal $B_i$, it is necessary to express the variables $u_1, u_2, \cdots, u_m$ in terms of $u_1, u_2, \cdots, u_m$, which in turn are expressed in terms of of the roots $x_1, x_2, \cdots, x_n$ of equation (\ref{eq16}). After this, we must equate these roots to each other in accordance with the table of equalities corresponding to the permutation $S_i$. The polynomial $g$ lies in the ideal $B$ if and only if it vanishes after the indicated operations. From this criterion it follows that the product $g \cdot h$ lies in $B_i$. Thus $B_i$ is a prime ideal. 

From this \marginpar{Page 141} it follows that all the dimensions of $V_1, V_2, \cdots, V_q$ are different, and since the dimension of the manifold $V_{q}$ is not less than zero, $V_1$ has dimension $\geq q - 1$.  But since $V_1$ is a critical manifold and $V$ contains non-critical points, the dimension of the space $V$ is less than $q$, and so we arrive at the theorem:

\begin{theorem}\label{Thm6} If an equation of type (\ref{eq16}), with linear coefficients in terms of the parameters, contains a chain of $q$ critical manifolds, each contained in the previous one, then every rational resolvent contains at least $q$ parameters. \end{theorem}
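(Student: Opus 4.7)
The plan is to show that any $s$-parameter resolvent forces a chain of $q$ strictly descending subvarieties in the parameter space $V$, and then use the dimension theory of prime ideals to conclude $s \geq q$. Let the resolvent (\ref{eq17}) have parameters $v_1, \ldots, v_s$ that are polynomials in $u_1, \ldots, u_m$, and for each $i$ let $V_i$ denote the image in $V$ of the critical manifold $U_{(S_i)}$. I would first check that the chain $V_1 \supset V_2 \supset \cdots \supset V_q$ is well defined: a point in $U_{(S_{i+1})}$ sits also in $U_{(S_i)}$, so its image under the parameter map lies in both $V_i$ and $V_{i+1}$.

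Next, I would establish that each containment $V_i \supsetneq V_{i+1}$ is strict by an inertia-group comparison, exactly in the spirit of the paragraph preceding the statement. Given a point $P$ on $U_{(S_{i+1})}$ that lies on no strictly higher critical manifold, Theorem \ref{Thm3} produces a nearby point $P'$ whose highest critical manifold is $U_{(S_i)}$, so the inertia group at $P'$ is a proper subgroup of that at $P$. Theorem \ref{Thm5} transports these inertia groups to the images $Q, Q' \in V$: the inertia group at the image of $P$ contains the full inertia group at $P$, while the inertia group at the image of $P'$ contains only the smaller one. This forces $Q \in V_{i+1}$ but $Q' \notin V_{i+1}$, so $V_i \setminus V_{i+1}$ is nonempty.

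Then I would argue, as in Section~1 for the ideals $A_{(S)}$, that each $V_i$ is the zero locus of a prime ideal $B_i$ in the coordinate ring of $V$. The membership test is the same: a polynomial $g(v_1, \ldots, v_s)$ lies in $B_i$ precisely when, after substituting the expressions for $v_j$ in terms of $u_1, \ldots, u_m$ (and hence in terms of the roots $x_1, \ldots, x_n$ via the linear dependence of the $a_j$ on the $u_k$), it vanishes under the identifications (\ref{eq4}) attached to $S_i$. The linearity hypothesis is what lets the factorization argument of Section~1 go through, so $B_i$ is prime.

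Finally, I would invoke the van der Waerden result quoted in Lemma~\ref{Lem2}: two prime ideals with one contained in the other and of the same dimension must coincide. Combined with the strict containments $V_i \supsetneq V_{i+1}$ above, this yields $\dim V_1 > \dim V_2 > \cdots > \dim V_q \geq 0$, and hence $\dim V_1 \geq q-1$. Since $V_1$ is a proper critical subvariety of $V$ and $V$ contains non-critical points, $\dim V \geq \dim V_1 + 1 \geq q$, which is precisely the bound $s \geq q$. The main obstacle I expect is the strictness step: verifying carefully that the inertia-group argument really does distinguish $V_i$ from $V_{i+1}$, given that the parameter map $U \to V$ can be highly non-injective and could in principle fuse the two critical manifolds; here the restriction to admissible closed paths noted after Theorem~\ref{Thm5} is essential and must be applied with care.
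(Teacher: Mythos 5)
Your proposal reproduces the paper's own argument essentially step for step: defining $V_i$ as the image of $U_{(S_i)}$, using Theorems \ref{Thm3} and \ref{Thm5} (with the admissible-paths convention) to force $V_i \supsetneq V_{i+1}$, verifying primeness of each $B_i$ via the same substitute-and-equate membership test that the linearity hypothesis enables, and then applying the van der Waerden dimension result underlying Lemma~\ref{Lem2} to get $\dim V_1 \geq q-1$ and hence $s \geq q$. Your closing caution about non-injectivity of the parameter map is exactly the point the paper addresses by restricting to admissible paths, so the proof and the paper's are in full agreement.
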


This theorem makes it possible to establish a lower bound for the number of resolvent parameters.

As an example, we consider the most general equations (i.e. with independent variables as coefficients) with an alternating Galois group. In this case, $U$ is a surface defined in $(n+1)$-dimensional space of $a_1, a_2, \cdots, a_n, z$ satisfying the equation
$$z^2 - D(a_1,a_2, \cdots, a_n) = 0,$$
where $D$ is the discriminant of equation (\ref{eq16}). It is easy to verify that each type of even permutation in the spaces $U$ corresponds to a critical manifold. Indeed, we can always construct an equation (possible reducible) for which the Galois group consists of the degrees of any even permutation $S$. This equation will be a particular form of equation (\ref{eq16}), since the coefficients of the latter are independent variables. On the other hand, the equation constructed in this way will certainly have $U_s$ in its parametric space (which is obtained from $U$ by critical manifolds giving some of the coefficients particular values).

For example, if 
$$S = (1, 2, \cdots, \mu_1)(\mu_1 + 1, \cdots, \mu_2) \cdots (\mu_{k-1} + 1, \cdots, n),$$

then as such a particular equation we can take 
$$(x^{\mu_1} - w_1)(x^{\mu_2-\mu_1} - w_2) \cdots (x^{n-\mu_k-1} - w_k) = 0,$$
where $w_1, w_2, \cdots, w_k$ are independent variables.

Thus, in our case, we obtain a lower bound for the possible number of parameters of the resolvent of equation (\ref{eq16}), if we consider the maximum number of links in all possible chains of even permutations of the $n$th degree, in which the next term is higher than the previous one. One of these chains can be 

$$S_1 < S_2 < \cdots < S_{[\frac{n-1}{2}]},$$
where 
\begin{center} $S_1 = (123), S_2 = (12345), \cdots, S_i = (123 \cdots 2i+1);$\\
$i = 1, 2, \cdots, [\frac{n-1}{2}].$\end{center}

This chain consists of $[\frac{n-1}{2}]$ links.

Let \marginpar{Page 142} us prove that the alternating group of degree $n$ does not contain a chain of length greater that $[\frac{n-1}{2}]$. Indeed noting in each permutation the number of cycles contained in it, including monomial cycles (invariant numbers), we note that for even permutations this number has the same parity as $n$. On the other hand, if, for example,
$$T_1 < T_2,$$
then the permutation $T_2$ must certainly contain fewer cycles than $T_1$. Since they have the same parity, these numbers must differ by at least $2$. Therefore the number of cycles in the permuations of the chain

$$T_k > T_{k-1} > \cdots > T_2 > T_1$$

cannot be less than 

$$1, 3, 5, \cdots, 2k-1$$
respectively. 

But $T_1$ cannot be the identity or a transposition, thus 

$$2k - 1 \leq n -2,$$
hence 
$$k \leq [\frac{n-1}{2}].$$

Thus the desired lower bound for the number of parameters is equal to 
\begin{center}\begin{equation}\label{eq25} s = [\frac{n-1}{2}].\end{equation}\end{center}

We compare these values with those obtained by Hilbert (\cite{Hi2}) for $5 \leq n \leq 9$:

\begin{center} \begin{tabular}{c | c | c | c | c | c}
\text{ }  & 5 & 6 & 7 & 8 & 9\\
\hline
s [according to formula (\ref{eq25})] & 2 & 2 & 3 & 3 & 4\\
\hline
s [according to Hilbert] & 1 & 2 & 3 & 4 & 4
\end{tabular}\end{center}

This comparison shows that for $n = 5$ the introduction of irrational resolvents actually reduces the number of parameters. In the cases $n = 6, 7, 9$, the number $s$ determined by the different means, actually gives the same value for the number of parameters. Finally, the case $n = 8$ requires additional investigation.

In conclusion, \marginpar{Page 143} we mention two main issues that arise when solving the problem under consideration: 

\begin{enumerate}[1)]
\item \textbf{The question about irrational resolvents}. We have just seen that the introduction of irrationalities into the domain of coefficients of the equation can significantly reduce the number of parameters in the resolvent. These irrationalities must be introduced so that not all the critical manifolds remain irreducible. With this space, $U$ and $V$ will be in algebraic, but not rational, dependence. Let the coordinates $v_1, v_2, \cdots, v_s$ of the space $V$ be determined by the equations 
$$\varphi^i(u_1,u_2, \cdots, u_m, v_s) = 0 \text{ } (i = 1, 2, \cdots, s).$$

Viewing the area of rationality as the totality of rational functions of $u_1, u_2, \cdots, u_m$, we can find a primitive element

$$\xi = c_1v_1 + c_2v_2 + \cdots + c_sv_s$$

in the field formed by the the elements $u_1, \cdots, u_m; v_1, \cdots, v_s.$ Let 

\begin{center} \begin{equation}\label{eq26}F(u_1, u_2, \cdots, u_m, \xi) = 0.\end{equation}\end{center}

Let us  denote by $W$ the surface formed by the coordinates $u_1, u_2, \cdots, u_m, \xi$, which are subject to equation (\ref{eq26}). The coordinate spaces $U$ and $V$ are rationally expressed in terms of the coordinates of the space $W$. By the inertia groups of equations (\ref{eq16}), (\ref{eq17}) we mean the set of permutations of the roots which arise when following infinitesimal closed paths in the space $W$.  In the latter space, closed paths can correspond to simple or closed paths repeated several times in the spaces $U, V$ . The problem is to construct such a $W$ such that due to repetition of closed paths in $U$ and $V$, the inertia groups corresponding to different critical manifolds coincide.

\item \textbf{The question of the upper bound for the number of parameters for the resolution.} If the space $U$ of parameters of equation (\ref{eq16}) is given in which critical manifolds form chains of length $\leq s$, then the question arises of the possibility of constructing a $s$-parametric resolvent. If $U_s$ is the highest critical manifold of the space, then in the space $V$ it must correspond to a critical manifold of dimension zero. This indicates the path to the actual construction of resolvents. The issue requires further research.
\end{enumerate}

\newpage

\bibliographystyle{alpha}
\bibliography{referencesfortranslations}

\end{document}